\theoremstyle{plain}
\newtheorem{Thm}{Theorem}[section]
\newtheorem{Cor}[Thm]{Corollary}
\newtheorem{lemma}[Thm]{Lemma}
\newtheorem{Prop}[Thm]{Proposition}
\newtheorem{Def}[Thm]{Definition}
\newtheorem{remark}[Thm]{Remark}
\newtheorem{example}[Thm]{Example}
\newtheorem{Ques}{Question}[section]
\newcommand{\gbar}{\overline{G}}
\newcommand{\Hbar}{\overline{H}}
\newcommand{\Fbar}{\overline{F}}
\title{Nordhaus-Gaddum Theorem for the \\ Distinguishing Chromatic Number}
\author{Karen L. Collins\\
\small Dept. of Mathematics and Computer Science\\
\small Wesleyan University\\
\small Middletown CT 06459-0128\\
\small\tt kcollins@wesleyan.edu\
\and
Ann Trenk\\
\small Department of Mathematics\\
\small Wellesley College\\
\small Wellesley MA 02481\\
\small\tt atrenk@wellesley.edu
}
\date{\dateline{March 9, 2012}{March 10, 2012}\\
\small Mathematics Subject Classifications: 05C15, 05C25, 68A10}
\begin{document}
\maketitle

\begin{abstract}  Nordhaus and Gaddum proved, for any graph $G$, that  $\chi(G) + \chi(\overline{G}) \leq n + 1$, where $\chi$ is the chromatic number and $n=|V(G)|$.  Finck characterized the class of  graphs that satisfy equality in this bound.  In this paper, we provide a new characterization of this class of graphs, based on vertex degrees, which yields a new polynomial-time recognition algorithm and efficient computation of the chromatic number of graphs in this class.   Our motivation comes from our theorem that generalizes the Nordhaus-Gaddum theorem to the distinguishing chromatic number:  for any graph $G$, $\chi_D(G) +\chi_D(\overline{G})\leq n+D(G)$. Finally, we characterize those graphs that achieve  equality in the sum upper bounds simultaneously for both the chromatic number and for our distinguishing chromatic number analog of the Nordhaus-Gaddum inequality. 

\end{abstract}

 \bigskip\noindent \textbf{Keywords: distinguishing number, distinguishing chromatic number, chromatic number, Nordhaus-Gaddum theorem}

\section{Introduction}

We provide a generalization of the classic Nordhaus and Gaddum Theorem for the chromatic number to the distinguishing chromatic number.  First, we recall their theorem, which gives bounds on the sum and the product of the chromatic number of a graph with that of its complement.  We write $\chi(G)$ for the chromatic number of graph $G$, and 
for the complement of graph $G$ we write $\gbar$. 
The upper bound in (2) below was proved by Zykov~\cite{Zy49} and the remaining three inequalities were proved by Nordhaus and Gaddum~\cite{NoGa56}.  
\begin{Thm} 
\smallskip 

If $G$ is a graph with $|V(G)| = n$ and $\chi(G)$ is the chromatic number of $G$, then
\begin{equation}
2 \sqrt{n} \le \chi(G) + \chi(\gbar) \le n + 1.
\end{equation}
\begin{equation}
n \le \chi(G) \chi(\gbar) \le \left(\frac{n+1}{2}\right)^2.
\end{equation}

\label{orig-NG}
\end{Thm}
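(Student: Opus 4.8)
The plan is to prove the four inequalities in Theorem~\ref{orig-NG} by combining standard bounds on the chromatic number with the extremal behavior of $\chi$ under complementation. The two most accessible pieces are the upper bound in $(1)$ and the lower bound in $(2)$. First I would establish the lower bound $n \le \chi(G)\chi(\gbar)$. The key observation is that any proper coloring of $G$ partitions $V(G)$ into $\chi(G)$ independent sets, each of which is a clique in $\gbar$; hence an independent set of $G$ has size at most $\chi(\gbar)$, since a clique on $k$ vertices needs $k$ colors. Consequently the largest color class in an optimal coloring of $G$ has at most $\chi(\gbar)$ vertices, and summing over the $\chi(G)$ color classes gives $n \le \chi(G)\,\chi(\gbar)$ directly.

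Next I would derive the upper bound on the sum, $\chi(G) + \chi(\gbar) \le n+1$. The natural approach is induction on $n$. For the base case $n=1$ both quantities equal $1$ and the bound holds. For the inductive step, I would delete a vertex $v$ and apply the inductive hypothesis to $G-v$ and $\overline{G-v} = \gbar - v$, obtaining $\chi(G-v) + \chi(\gbar - v) \le n$. Deleting a vertex drops the chromatic number by at most one, so $\chi(G) \le \chi(G-v)+1$ and $\chi(\gbar) \le \chi(\gbar-v)+1$, which only yields $\chi(G)+\chi(\gbar) \le n+2$. To recover the sharper bound, the crucial point is that $v$ cannot force an increase in both colorings at once: if $\chi(G) = \chi(G-v)+1$ then $v$ is adjacent in $G$ to vertices of all $\chi(G-v)$ color classes, which constrains its nonadjacencies and prevents the analogous forcing in $\gbar$. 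Making this ``not both can increase'' dichotomy precise is the step I expect to be the main obstacle, and it is the heart of the Nordhaus--Gaddum argument.

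Finally I would bootstrap these two results to get the remaining two inequalities using elementary inequalities between sums and products. For the upper bound in $(2)$, the AM--GM inequality gives $\chi(G)\chi(\gbar) \le \bigl(\tfrac{\chi(G)+\chi(\gbar)}{2}\bigr)^2$, and substituting the just-proved bound $\chi(G)+\chi(\gbar) \le n+1$ yields $\chi(G)\chi(\gbar) \le \bigl(\tfrac{n+1}{2}\bigr)^2$. Symmetrically, for the lower bound in $(1)$, AM--GM in the form $\chi(G)+\chi(\gbar) \ge 2\sqrt{\chi(G)\chi(\gbar)}$ combined with $\chi(G)\chi(\gbar) \ge n$ gives $\chi(G)+\chi(\gbar) \ge 2\sqrt{n}$. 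Thus both remaining inequalities follow formally from the first two once those are in hand, so all the genuine combinatorial work is concentrated in the product lower bound and the sum upper bound.
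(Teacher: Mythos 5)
The paper itself offers no proof of this theorem; it is quoted from Nordhaus--Gaddum and Zykov, so your attempt can only be measured against the classical argument. Three of your four steps are fine: the product lower bound via color classes being cliques of $\gbar$ (hence of size at most $\chi(\gbar)$), and both AM--GM bootstraps, are correct and are exactly how the classical proof disposes of those inequalities.

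The genuine gap is in the sum upper bound, and it is not merely a detail you deferred: the dichotomy you propose is false as stated. It is not true that $v$ ``cannot force an increase in both colorings at once.'' Take $G = C_5$ and delete any vertex $v$: then $\chi(G) = 3 > 2 = \chi(G-v)$ and, since $\overline{C_5} \cong C_5$, also $\chi(\gbar) = 3 > 2 = \chi(\gbar - v)$. Both chromatic numbers jump, so no argument can establish your claimed exclusivity. The correct completion of the induction splits differently. If at most one of the two chromatic numbers increases when $v$ is restored, the inductive bound $\chi(G-v)+\chi(\gbar-v)\le n$ gives $\chi(G)+\chi(\gbar)\le n+1$ at once. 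If both increase, then $v$ must see all $\chi(G-v)$ colors in $G$ and all $\chi(\gbar-v)$ colors in $\gbar$, so $\deg_G(v)\ge\chi(G-v)$ and $\deg_{\gbar}(v)\ge\chi(\gbar-v)$; adding these and using $\deg_G(v)+\deg_{\gbar}(v)=n-1$ yields $\chi(G-v)+\chi(\gbar-v)\le n-1$, and hence $\chi(G)+\chi(\gbar)\le (n-1)+2 = n+1$ again. So the point is not that double increase is impossible, but that when it happens the inductive hypothesis is strictly slack by exactly enough to absorb the extra $+1$. With that substitution your outline becomes a complete and correct proof.
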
 
\noindent Finck \cite{Fi66} characterized the graphs that achieve equality for each of the four bounds in Equations (1.1) and (1.2). 

{\rm A labeling (or coloring) of the vertices of a graph $G$, $h :V(G) \rightarrow
\{1,\ldots,r\}$, is said to be \emph{$r$-distinguishing} (or
just \emph{distinguishing}) if the only automorphism of the graph that preserves all of the vertex labels is the identity.  The \emph{distinguishing number of $G$,} denoted by $D(G)$,
is  defined as the minimum number $r$ so that $G$ has an $r$-distinguishing labeling.
Albertson and Collins  study the distinguishing number in \cite{AlCo96}   and  subsequently 
other authors have studied the distinguishing number of  graphs and of other structures, see 
for example \cite {AlBo06, ArChDe08,  BaCa11, ElSc11,  ImKlTr07, Ko08, LaNgSa10,Ty04, WoZh09}, and many others.  

The automorphism group of a graph is the same as the automorphism group of its complement, hence we get the following remark.

\begin{remark} For any graph $G$, we have \label{dgbar} $D(G)=D(\gbar)$.
\end{remark}

In \cite{CoTr06} we introduce the distinguishing chromatic number of a graph $G$, denoted by $\chi_D(G)$, that requires the coloring to be proper as well as distinguishing.  Together with Hovey we explored the distinguishing chromatic number from the perspective of group theory in \cite{CoHoTr09}.  The subject has received considerable attention from others, who considered the distinguishing chromatic number in \cite{ChHaKa10}, and others both the distinguishing number and the distinguishing chromatic number \cite{Ch09, LaBh09, Tu11, WaZh07}.   In this paper we ask whether there is a version of Theorem \ref{orig-NG} for the distinguishing chromatic number. 
\begin{Def}
{\rm A labeling (or coloring) of the vertices of a graph $G$, $h :V(G) \rightarrow
\{1,\ldots,r\}$, is said to be \emph{proper $r$-distinguishing} (or
just \emph{proper distinguishing}) if it is a proper labeling (i.e.,
coloring)  of the
graph and the only automorphism of the graph that preserves all of the vertex
labels is the identity.  The \emph{distinguishing chromatic number} of a graph $G$, denoted by
$\chi_{D}(G)$, is the minimum $r$ such that $G$ has a proper
$r$-distinguishing labeling.}
\end{Def}
We note that the two lower bounds from Theorem~\ref{orig-NG}  are still valid for the distinguishing chromatic number since $\chi(G) \le \chi_D(G)$ for all graphs $G$.    Thus for any graph $G$ with $n=|V(G)|$ we have: 

\begin{equation}2 \sqrt{n} \le \chi_D(G) + \chi_D(\gbar) \mbox{ and} 
\end{equation}
\begin{equation} n \le \chi_D(G)\cdot \chi_D(\gbar)
\end{equation}
For any graph $G$ with $D(G)=1$, we have $\chi(G)=\chi_D(G)$, and so any graph $G$ with $D(G)=1$ that  satisfies equality in one of the lower bounds of Equations (1.1) and (1.2)  will  be an example of a graph for which the corresponding bound in Equations (1.3) and (1.4) is tight.  Finck's constructions \cite{Fi66} of such graphs include examples with $D(G)
=1$.  Cavers and Seyffarth provide further examples in  \cite{CaSe}.

Before concluding this section, we present some background definitions and Brooks' Theorem.  We use $|S|$ to denote the size of set $S$ and 
$\Delta (G)$ to denote the largest vertex degree in graph $G$.  
The independent set with $s$ vertices is denoted  by $I_s$.  For a vertex $u\in V(G)$, we let $N(u)$ be the set of neighbors of $u$ in $G$.  
We will routinely use  $\gbar - v$ in place of $\overline{G-v}$ and it is easy to see that these are equivalent.
 If $S$ is a set of vertices in $G$, we write $G[S]$ to denote the subgraph  induced  in $G$ by $S$.    We write $Aut(G)$ for the group of all automorphisms of the graph $G$.
 We say that  graph $H$ is \emph{color-critical} if $\chi(H-x) < \chi(H)$ for every vertex $x \in V(H)$.  We will also need Brooks' Theorem:

\begin{Thm} 
(Brooks [1941]) If $G$ is a connected graph other than a complete graph or an odd cycle, then $\chi(G)\leq \Delta(G)$. 
\label{brooks-thm}
\end{Thm}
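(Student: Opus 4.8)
The plan is to strengthen the trivial greedy bound $\chi(G)\le \Delta(G)+1$ by exactly one color. Write $\Delta=\Delta(G)$. The engine of the proof is the following observation: if we can list the vertices as $v_1,v_2,\ldots,v_n$ so that every $v_i$ with $i<n$ has at least one neighbor $v_j$ with $j>i$, then coloring greedily in this order handles each such $v_i$ using at most $\Delta-1$ already-colored neighbors, hence from the palette $\{1,\ldots,\Delta\}$. The only possible trouble is the last vertex $v_n$, which may see all $\Delta$ of its neighbors colored. So the whole argument reduces to arranging the ordering so that $v_n$'s neighbors use at most $\Delta-1$ distinct colors.

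First I would dispose of the easy situations. If $G$ is not $\Delta$-regular, pick a vertex $v_n$ of degree less than $\Delta$, take a spanning tree rooted at $v_n$, and order the vertices by nonincreasing distance from $v_n$, so each non-root vertex precedes its tree-parent and $v_n$ is last. Greedy coloring then uses at most $\Delta$ colors, since even $v_n$ has fewer than $\Delta$ colored neighbors. I would also clear the small cases: a connected graph with $\Delta\le 2$ is a path or a cycle, and after excluding complete graphs ($K_1,K_2$) and odd cycles, the remaining graphs (paths and even cycles) satisfy $\chi\le 2\le\Delta$ directly.

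The substance lies in the $\Delta$-regular case with $\Delta\ge 3$. Here I would prove the key structural lemma: a connected, $\Delta$-regular, non-complete graph with $\Delta\ge 3$ contains a vertex $v$ together with two \emph{nonadjacent} neighbors $a,b$ of $v$ such that $G-\{a,b\}$ is connected. Granting this, order the vertices with $v_1=a$, $v_2=b$, $v_n=v$, and $v_3,\ldots,v_{n-1}$ listed by nonincreasing distance (in $G-\{a,b\}$) from $v$. Greedy coloring assigns color $1$ to both $a$ and $b$ (legal, as they are nonadjacent), colors each interior vertex from $\{1,\ldots,\Delta\}$ as above, and finally finds that $v$'s $\Delta$ neighbors carry at most $\Delta-1$ colors because $a$ and $b$ coincide; so a color remains for $v$.

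The main obstacle is establishing the structural lemma, which I expect to require a short case analysis on connectivity. If $G$ is $3$-connected, then since $G$ is not complete there are two vertices $a,b$ at distance $2$ sharing a common neighbor $v$, and deleting the two vertices $a,b$ leaves $G$ connected. If $G$ is $2$-connected but not $3$-connected, I would locate a $2$-cut, choose $v$ at the cut with neighbors $a,b$ in distinct components of its removal, and check that $a,b$ are nonadjacent and that their deletion preserves connectivity; the remaining case of a cut vertex can be folded in by coloring block-by-block and merging the colorings via color permutations at the cut vertices. Verifying connectivity of $G-\{a,b\}$ in each branch is the delicate point, but each branch reduces to a direct consequence of the assumed connectivity, so the case analysis, though somewhat fiddly, is routine.
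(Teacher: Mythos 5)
This is Brooks' Theorem, which the paper cites as a classical result and does not prove, so there is no in-paper argument to compare against; your proposal must stand on its own. It follows the standard greedy/Lov\'asz strategy (non-regular case via a spanning-tree order rooted at a low-degree vertex; regular case via two nonadjacent neighbors $a,b$ of some $v$ with $G-\{a,b\}$ connected, colored first with the same color), and the non-regular case, the $\Delta\le 2$ case, the $3$-connected case, and the cut-vertex reduction are all fine.

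The gap is in the $2$-connected, non-$3$-connected branch of your structural lemma, and it is exactly the step you flag as ``routine.'' Your recipe --- take a $2$-cut $\{v,u\}$ and let $a,b$ be neighbors of $v$ in distinct components of $G-\{v,u\}$ --- does \emph{not} guarantee that $G-\{a,b\}$ is connected, even for $3$-regular graphs. Concretely, take the $3$-regular graph on $\{v,u,a,x_1,x_2,b,c,d,e,f\}$ with edges $va, vb, vc$; $ux_1, ux_2, ub$; $ax_1, ax_2, x_1x_2$; $bd$; $ce, cf$; $de, df, ef$. Here $\{v,u\}$ is a $2$-cut with components $\{a,x_1,x_2\}$ and $\{b,c,d,e,f\}$, and $a,b$ are nonadjacent neighbors of $v$ in distinct components, yet $G-\{a,b\}$ splits off $\{u,x_1,x_2\}$ (all of whose outside neighbors are $a$ or $b$). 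The correct choice in this branch is different: take $v$ to be a vertex lying in some $2$-cut, so that $G-v$ is connected but has a cut vertex, and choose $a,b$ to be neighbors of $v$ that are non-cut-vertices of $G-v$ lying in two distinct \emph{end blocks} of the block tree of $G-v$ (such neighbors exist by $2$-connectivity of $G$); then $a,b$ are nonadjacent, $G-v-\{a,b\}$ is connected, and since $\deg(v)=\Delta\ge 3$ the vertex $v$ has a third neighbor, so $G-\{a,b\}$ is connected. In my example this recipe selects $a$ and $c$ rather than $a$ and $b$, and $G-\{a,c\}$ is indeed connected. Without this repair the lemma, and hence the whole regular case, is unproved.
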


In this paper we revisit the Nordhaus-Gaddum inequalities (Theorem~\ref{orig-NG}) and the classes of graphs for which the upper bound in Equation~(1.1) is tight.  In Section~2 we give analogues of the upper bounds in Equations~(1.1) and (1.2) for the distinguishing chromatic  number.  In Section~3 we  give a new characterization of those graphs that achieve equality for the upper bound in Equation~(1.1), based on vertex degrees.  Our characterization leads to a new polynomial-time recognition algorithm for this class and efficient computation of the chromatic number of graphs in this class.  In Section~4 we characterize those graphs that achieve  equality in the upper bound of Equation~(1.1) and our distinguishing chromatic number analog of this Nordhaus-Gaddum inequality.

\section{The Nordhaus Gaddum inequalities for $\chi$ and  $\chi_D$}

Nordhaus and Gaddum \cite{NoGa56} describe three classes of graphs to illustrate that their bounds are tight.    The first class is the complete graphs, which are tight for the upper bound in Equation~(1.1) and the lower bound in Equation~(1.2); next is the complete multipartite graphs with $q$ parts, each of size $q$, which are tight for the lower bounds in Equation~(1.1) and Equation~(1.2); and third,  the disjoint union of a complete graph and an independent set with one fewer vertex, $K_n+I_{n-1}$ which are tight for the upper bounds in Equation~(1.1) and Equation~(1.2).   They note that it is not possible to satisfy the lower bound in Equation~(1.1) and the upper bound in Equation~(1.2) simultaneously.  In Table \ref{tab:NG-table}, we record the values for the distinguishing number and the distinguishing chromatic number for these examples.

\begin{table}[htbp] 

\begin{tabular}{|l||c|c|c|c|c|c|} \hline 
&&&&& &\\
$G$ & $|V(G)|$& $\chi_D(G)$ &$\chi_D(\gbar)$& $\chi_D(G)+\chi_D(\gbar) $& $\chi_D(G)\cdot \chi_D(\gbar)$& $D(G)$\\ \hline 
&&&&&&\\
$K_n$ &$n$ &$n$ & $n$ & $2n$ & $n^2$& $n$\\ \hline
&&&&&&\\
$K_{\underbrace{q,q,\ldots,q}_{\mbox{\footnotesize $q$ times}}}$&$q^2$ &$q^2$ & $q+1$& $q^2+q+1$ & $q^2(q+1)$&$q+1$ \\ \hline
&&&&&&\\
$K_t+I_{t-1}$& $2t-1$ & $t$ & $2t-1$ & $3t-1$&$(2t-1)t$ &$t$\\ \hline
\end{tabular}
\bigskip
\caption{\small Examples from Nordhaus and Gaddum \cite{NoGa56}, together with their distinguishing chromatic numbers. }
\label{tab:NG-table} 
\end{table}

The examples in Table \ref{tab:NG-table} make clear that we will need to increase the upper bounds in Equations~(1.1) and (1.2) in order to prove analogues for the distinguishing chromatic number.  Each of these examples in the table is a complete multipartite graph ($K_n$ and $K_{q,q,\ldots,q}$) or the complement of a complete multipartite graph ($K_t+I_{t-1}$).  
 Collins and Trenk \cite{CoTr06} have shown that complete multipartite graphs are exactly the graphs $G$ for which $\chi_D(G)=|V(G)|$, that is,  the graphs with the largest possible distinguishing chromatic number.  Note that the distinguishing number of each graph in the table is equal to either its distinguishing chromatic number or the distinguishing chromatic number of its complement.  This leads us to our first step in finding an appropriate generalization of the Nordhaus-Gaddam theorems, which is to consider the distinguishing chromatic number of the complements of complete multipartite graphs.  
 
\begin{Prop} \label{cmg-gbar} 
Let $G$ be a complete multipartite graph.  Then $\chi_D(\gbar)=D(\gbar)$. 
\end{Prop}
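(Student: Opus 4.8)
The plan is to exploit the structural fact that $\gbar$ is a disjoint union of complete graphs, together with the general inequality $\chi_D(H) \ge D(H)$ that holds for every graph $H$ directly from the definitions. Since $G = K_{n_1,\ldots,n_k}$ is complete multipartite, two vertices are adjacent in $\gbar$ exactly when they lie in the same part of $G$, so $\gbar = K_{n_1} \cup K_{n_2} \cup \cdots \cup K_{n_k}$ is a disjoint union of cliques, where the $n_i$ are the part sizes of $G$. Separately, any proper distinguishing labeling is in particular a distinguishing labeling, so a labeling realizing $\chi_D(\gbar)$ is itself distinguishing and therefore uses at least $D(\gbar)$ colors; this gives the inequality $D(\gbar) \le \chi_D(\gbar)$ for free.

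For the reverse inequality I would show that every distinguishing labeling of $\gbar$ is automatically proper. Indeed, if some clique $K_{n_i}$ contained two vertices $u$ and $v$ receiving the same color, then the transposition $(u\,v)$ would be a nontrivial automorphism of $\gbar$ preserving every label, contradicting that the labeling is distinguishing. Consequently each clique must be rainbow-colored, which is precisely the condition for the labeling to be a proper coloring of the disjoint union $\gbar$. Applying this to a distinguishing labeling that uses $D(\gbar)$ colors yields a proper distinguishing labeling with $D(\gbar)$ colors, so $\chi_D(\gbar) \le D(\gbar)$. Combining the two inequalities gives $\chi_D(\gbar) = D(\gbar)$.

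The only real content is the middle observation that a repeated color inside a clique always leaves a label-preserving transposition available, so I do not expect a genuine obstacle once that is in hand. The one point I would state carefully is why swapping two same-colored vertices of a common clique is an automorphism of all of $\gbar$, and not merely of that component: the vertices $u,v$ have the identical closed neighborhood $C_i$ (the clique containing them) in $\gbar$ and the swap fixes every other vertex, so it is indeed a global automorphism. The edge cases are harmless, since a clique $K_{n_i}$ with $n_i = 1$ is vacuously rainbow, and the extremes $G = K_n$ and $G = I_n$ reduce to $\gbar = I_n$ and $\gbar = K_n$, both of which satisfy $\chi_D(\gbar) = D(\gbar) = n$ in agreement with Table~\ref{tab:NG-table}.
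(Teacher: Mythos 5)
Your proposal is correct and follows essentially the same argument as the paper: the trivial inequality $D(\gbar)\le\chi_D(\gbar)$, plus the observation that in the disjoint union of cliques $\gbar$ any distinguishing labeling must be rainbow on each clique (else a label-preserving transposition survives), hence is automatically proper. Your extra care in verifying that the transposition is a global automorphism of $\gbar$ is a nice touch but the substance is identical.
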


\begin{proof}  By the definition of $\chi_D$ and $D$, the inequality  $\chi_D(\gbar)\geq D(\gbar)$ holds for all graphs $G$. Since $G$ is a complete multipartite graph, we know that  $\gbar$ is a collection of disjoint complete graphs.   Let $\phi:V(\gbar)\rightarrow \{1,2, \ldots, D(\gbar)\}$ be a distinguishing labeling of $\gbar$.  Let $u,v\in V(\gbar)$ be adjacent in $\gbar$.  Then $u,v$ are both in the same complete subgraph of $\gbar$.  The automorphism of $\gbar$ that switches $u$ and $v$ and fixes all other vertices must not preserve labels, so $\phi(u)\neq \phi(v)$.   Thus, $\phi$ is both proper and distinguishing, so $\chi_D(\gbar)\leq D(\gbar)$ and $ \chi_D(\gbar)=D(\gbar)$.
\end{proof}

This suggests a natural generalization of the Nordhaus-Gaddum bound. 
Theorem~\ref{NG-new}  presents an upper bound generalizing Equation~(1.1), and Corollary~\ref{NG-cor} gives the resulting upper bound generalizing Equation~(1.2).
Note that the analogous lower bounds were presented in Equations (1.3) and (1.4).

\begin{Thm}
If $G$ is a graph  with $n = |V(G)|$  then \[   \chi_D(G) + \chi_D(\gbar) \le n + D(G)\]

\label{NG-new}
\end{Thm}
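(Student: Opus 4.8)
The plan is to argue by cases according to whether $G$ or $\gbar$ is a complete multipartite graph, since the Collins--Trenk characterization ($\chi_D(H)=|V(H)|$ exactly when $H$ is complete multipartite) together with Proposition~\ref{cmg-gbar} settles those cases on the nose. If $G$ is complete multipartite, then $\chi_D(G)=n$, and since $\gbar$ is then a disjoint union of complete graphs, Proposition~\ref{cmg-gbar} gives $\chi_D(\gbar)=D(\gbar)=D(G)$ (using Remark~\ref{dgbar}); hence $\chi_D(G)+\chi_D(\gbar)=n+D(G)$, with equality. The case in which $\gbar$ is complete multipartite is symmetric: applying Proposition~\ref{cmg-gbar} with the roles of $G$ and $\gbar$ interchanged yields $\chi_D(G)=D(G)$ and $\chi_D(\gbar)=n$. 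Since $K_n$ and $K_{q,q,\ldots,q}$ are complete multipartite and $K_t+I_{t-1}$ is a disjoint union of complete graphs, these two cases already account for all three equality examples in Table~\ref{tab:NG-table}.

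It remains to treat the case in which neither $G$ nor $\gbar$ is complete multipartite, so that the characterization supplies the strict bounds $\chi_D(G)\le n-1$ and $\chi_D(\gbar)\le n-1$. I would first dispose of the subcase $D(G)=1$. Here $Aut(G)=Aut(\gbar)$ is trivial, so every proper coloring is automatically distinguishing; thus $\chi_D(G)=\chi(G)$ and $\chi_D(\gbar)=\chi(\gbar)$, and the desired inequality is exactly the classical Nordhaus--Gaddum bound $\chi(G)+\chi(\gbar)\le n+1=n+D(G)$ of Theorem~\ref{orig-NG}.

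For $D(G)\ge 2$ I would induct on $n$, exploiting the identity $\overline{G-v}=\gbar-v$ recorded earlier. Adding a vertex back costs at most one color in each graph: take an optimal proper distinguishing coloring of $G-v$ and give $v$ a brand-new color. This keeps the coloring proper and forces any color-preserving automorphism of $G$ to fix $v$ (the unique vertex of the new color) and hence to restrict to a color-preserving automorphism of $G-v$, which must be the identity; so $\chi_D(G)\le\chi_D(G-v)+1$, and likewise $\chi_D(\gbar)\le\chi_D(\gbar-v)+1$. Combining these with the inductive hypothesis applied to $G-v$ gives $\chi_D(G)+\chi_D(\gbar)\le\bigl(\chi_D(G-v)+\chi_D(\gbar-v)\bigr)+2\le n+1+D(G-v)$, so the induction closes as soon as some vertex $v$ satisfies $D(G-v)\le D(G)-1$.

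The main obstacle is precisely guaranteeing such a vertex. The distinguishing number is not monotone under vertex deletion, and for certain highly symmetric graphs no single deletion lowers it; for instance, for the cycles $C_n$ with $n\ge 6$ one has $D(C_n)=2$ while $C_n-v$ is a path of the same distinguishing number $2$. These near-vertex-transitive graphs form the residual regime, and I expect to handle it separately by bringing in Brooks' Theorem (Theorem~\ref{brooks-thm}): exactly in this regime $G$ is connected, non-complete, and of small maximum degree, so Brooks bounds $\chi(G)$ and $\chi(\gbar)$ by $\Delta(G)$ and $\Delta(\gbar)$, leaving enough slack against $n+1$ to absorb the bounded distinguishing overhead $\chi_D-\chi$ on each side and recover $\chi_D(G)+\chi_D(\gbar)\le n+1\le n+D(G)$. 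Making this absorption quantitative---showing the degree bounds together with the overhead always beat $n+D(G)$ in the stuck cases, possibly by instead saving the extra color on one of the two sides when re-inserting $v$---is the crux of the argument.
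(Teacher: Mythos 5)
Your proposal is not a complete proof, and the missing piece is the heart of the matter rather than a technicality. The reductions you do carry out are sound: the complete multipartite cases via Proposition~\ref{cmg-gbar}, the $D(G)=1$ case via the classical bound, and the inequality $\chi_D(G)\le \chi_D(G-v)+1$ (a fresh color on $v$ forces every color-preserving automorphism to fix $v$ and restrict to $G-v$). But your induction closes only if some vertex $v$ satisfies $D(G-v)\le D(G)-1$, and, as you yourself observe with $C_n$ for $n\ge 6$, no such vertex need exist. The proposed rescue via Brooks' Theorem is not an argument: (a) you have not characterized the ``stuck'' graphs, and they are not confined to connected, near-vertex-transitive graphs of small maximum degree (e.g.\ $P_4$ and $2K_2$ are already stuck); (b) the arithmetic is against you, since $\deg_G(v)+\deg_{\gbar}(v)=n-1$ for every $v$ gives $\Delta(G)+\Delta(\gbar)\ge n-1$, so applying Brooks to both sides yields at best $\chi(G)+\chi(\gbar)\le n-1$, and that only for regular graphs --- a saving of $2$ over $n+1$; and (c) you would then need the total overhead $(\chi_D(G)-\chi(G))+(\chi_D(\gbar)-\chi(\gbar))$ to be at most $D(G)+1$, which you have no bound for (it is not controlled by $D$ in general --- already $\chi_D(C_6)-\chi(C_6)=2$, and nothing you cite bounds this quantity). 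So the residual regime is exactly where the theorem remains unproved.

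For comparison, the paper proves the theorem in one step with no induction and no case analysis: fix a distinguishing coloring of $G$ with colors $1,\dots,D(G)$ (it is simultaneously distinguishing for $\gbar$), let $V_i$ be the $i$-th color class, and apply the classical bound $\chi(G[V_i])+\chi(\gbar[V_i])\le |V_i|+1$ to recolor each $G[V_i]$ and each $\gbar[V_i]$ properly using a fresh palette of $|V_i|+1$ colors per class. The total is $\sum_i(|V_i|+1)=n+D(G)$, the colorings are proper by construction, and they are distinguishing because the disjoint palettes force any color-preserving automorphism to preserve each $V_i$, whence it is the identity by the distinguishing property of the original coloring. The lesson is that partitioning by the color classes of a distinguishing coloring sidesteps the non-monotonicity of $D$ under vertex deletion, which is precisely where your approach breaks down.
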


\begin{proof}
Fix a distinguishing coloring of graph $G$ using   colors  in the set \[C=\{1,2,3,\ldots, D(G)\}\]  This simultaneously provides a distinguishing coloring of $\gbar$.      For each $i \in C$, we let $V_i$ be the vertices of color $i$, and let  $G_i = G[V_i]$, thus  $\overline{G_i} = \gbar[V_i]$.    By Theorem~\ref{orig-NG}, we know $\chi(G_i) + \chi(\overline{G_i} ) \le |V_i| + 1 $ for each $i \in C$.  Thus we may recolor the graph $G_i$ and separately recolor the graph  $\overline{G_i}$ using   $|V_i| + 1$ new colors so that  both new colorings are proper.    We do this for each $i \in C$ using a new set of $|V_i| + 1 $ colors for each $i$.    The result is a   coloring of $G$ and a   coloring of $\gbar$ using a total of $\sum_{i=1}^{D(G)}(|V_i| + 1) = |V(G)| + D(G) = n + D(G)$ colors.    By construction, these   colorings of $G$ and $\gbar$ are proper.  Moreover,  we show they are distinguishing.  Suppose there were a non-trivial automorphism $\sigma$  of $G$ that preserved colors.  Since a new set of colors is used for each $i$, we know that $\sigma$ must  preserve membership in $V_i$ for each $i$.  However, the original coloring was distinguishing, so the only  automorphism  of $G$ that preserves membership in $V_i$ for each $i$ must be the identity.
 
\end{proof}

\begin{Cor}
If $G$ is a graph  with $n = |V(G)|$  then 
 $  \chi_D(G) \chi_D(\gbar) \le \left(\frac{n+D(G)}{2}\right)^2.$ 
 
\label{NG-cor}
\end{Cor}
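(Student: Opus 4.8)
The plan is to derive the product bound directly from the sum bound in Theorem~\ref{NG-new} via the arithmetic-mean--geometric-mean inequality, exactly as the upper bound in Equation~(1.2) follows from the upper bound in Equation~(1.1) in the classical Nordhaus-Gaddum setting. First I would record the elementary fact that for any two nonnegative real numbers $a$ and $b$ one has $ab \le \left(\frac{a+b}{2}\right)^2$, which is immediate from the identity $\left(\frac{a+b}{2}\right)^2 - ab = \frac{(a-b)^2}{4} \ge 0$.

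Next I would apply this inequality with $a = \chi_D(G)$ and $b = \chi_D(\gbar)$, both of which are positive integers and hence nonnegative, to obtain $\chi_D(G)\chi_D(\gbar) \le \left(\frac{\chi_D(G)+\chi_D(\gbar)}{2}\right)^2$. Since $\chi_D(G)+\chi_D(\gbar) \le n + D(G)$ by Theorem~\ref{NG-new}, and since the map $t \mapsto t^2$ is monotone increasing on the nonnegative reals, I would conclude $\left(\frac{\chi_D(G)+\chi_D(\gbar)}{2}\right)^2 \le \left(\frac{n+D(G)}{2}\right)^2$, and the claimed inequality follows by transitivity.

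There is essentially no obstacle here: the mathematical content of the corollary is carried entirely by Theorem~\ref{NG-new}, and the passage from the sum bound to the product bound is a one-line application of AM--GM together with monotonicity of squaring. The only point requiring a word of care is confirming nonnegativity, so that AM--GM applies to $\chi_D(G)$ and $\chi_D(\gbar)$ and so that squaring preserves the inequality with right-hand side $\frac{n+D(G)}{2}$; both hold automatically since the chromatic numbers and the distinguishing number are each at least $1$.
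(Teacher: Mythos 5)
Your proposal is correct and follows essentially the same route as the paper: both derive $\chi_D(G)\chi_D(\gbar) \le \left(\frac{\chi_D(G)+\chi_D(\gbar)}{2}\right)^2$ from $(x-y)^2 \ge 0$ and then apply Theorem~\ref{NG-new} with monotonicity of squaring. No gaps.
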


\begin{proof}
 We follow the proof given in  \cite{NoGa56}.    For all real numbers $x,y $ we know $0 \le (x-y)^2  $  and thus 
$ 4xy \le (x+y)^2$ and $xy \le  (\frac{x+y}{2} )^2$.   Substitute   $x = \chi_D(G)$ and $y = \chi_D(\gbar)$ into this last inequality and then apply Theorem~\ref{NG-new} to finish the proof.
\end{proof}

Theorem \ref{NG-new} is robust, and in Proposition \ref{prop-cD} we extend it to any group action on our graph $G$.   

\begin{Def} \rm \label{DG}  Let $G$ be a graph  and let $\Gamma$ be a subgroup of $Aut(G)$.  The {\it  distinguishing number of $G$ with respect to $\Gamma$,} denoted by $D^{\Gamma}(G)$, is the minimum number of colors needed to color the vertices of $G$ so that  no non-identity element of $\Gamma$  preserves the colors.  
\end{Def}

\begin{Def} \rm Let $G$ be a graph  and let $\Gamma$ be a subgroup of $Aut(G)$.  The {\it  distinguishing chromatic number of $G$ with respect to $\Gamma$,} denoted by $\chi_D^{\Gamma}(G)$, is the minimum number of colors needed to color the vertices of $G$ so that the coloring is proper, and no non-identity element of $\Gamma$  preserves the colors.  
\end{Def}
 
\begin{Prop} \label{prop-cD} If  $G$ be a graph  with $n = |V(G)|$  and  $\Gamma$ is a subgroup of $Aut(G)$, 
then 
 \[  \chi_D^{\Gamma}(G) + \chi_D^{\Gamma}(\gbar) \le n + D^{\Gamma}(G).\]

\label{NG-new-r}
\end{Prop}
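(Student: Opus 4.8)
The plan is to mimic the proof of Theorem~\ref{NG-new} almost verbatim, replacing the full automorphism group by $\Gamma$ and the invariant $D$ by $D^{\Gamma}$. The one structural fact I need at the outset is that $\Gamma$ acts on both $G$ and $\gbar$: since $Aut(G) = Aut(\gbar)$ (the observation recorded before Remark~\ref{dgbar}), any subgroup $\Gamma \le Aut(G)$ is simultaneously a subgroup of $Aut(\gbar)$, and a coloring of the common vertex set that is $\Gamma$-distinguishing for $G$ is automatically $\Gamma$-distinguishing for $\gbar$. This is precisely what will let a single coloring serve both graphs and make $D^{\Gamma}(G)$, rather than two separate quantities, appear on the right-hand side.

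First I would fix a $\Gamma$-distinguishing coloring of $G$ using the color set $C = \{1, 2, \ldots, D^{\Gamma}(G)\}$; by the remark above this is simultaneously a $\Gamma$-distinguishing coloring of $\gbar$. For each $i \in C$, let $V_i$ be the set of vertices receiving color $i$, and set $G_i = G[V_i]$ and $\overline{G_i} = \gbar[V_i]$. Applying the original Nordhaus-Gaddum bound (Theorem~\ref{orig-NG}) to each $G_i$ gives $\chi(G_i) + \chi(\overline{G_i}) \le |V_i| + 1$, so I can properly recolor $G_i$ and separately properly recolor $\overline{G_i}$ using a fresh block of $|V_i| + 1$ colors, with disjoint blocks for distinct values of $i$. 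Summing over $C$ yields proper colorings of $G$ and of $\gbar$ that together use $\sum_{i \in C}(|V_i| + 1) = n + D^{\Gamma}(G)$ colors, which is exactly the claimed bound.

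The substance of the argument, and the step I would treat most carefully, is verifying that these proper colorings are $\Gamma$-distinguishing. Suppose some non-identity $\sigma \in \Gamma$ preserves the new coloring of $G$. Because distinct color blocks are used for distinct $i$, any color-preserving automorphism must map each $V_i$ into itself; hence $\sigma$ preserves the color classes of the original $\Gamma$-distinguishing coloring. But that coloring was chosen so that no non-identity element of $\Gamma$ fixes its color classes, forcing $\sigma$ to be the identity, a contradiction. The identical argument, using that the same coloring $\Gamma$-distinguishes $\gbar$, handles the complement. I expect no genuine obstacle here: the only point requiring a moment's care is the dual role of $\Gamma$ as a subgroup of both $Aut(G)$ and $Aut(\gbar)$, and the fact that the recoloring within each color class never merges two classes, so the coarser $\Gamma$-distinguishing structure survives intact.
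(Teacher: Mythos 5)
Your proposal is correct and is exactly the argument the paper intends: the paper's proof of this proposition simply says it is ``similar to the proof of Theorem~\ref{NG-new},'' and you have carried out that adaptation faithfully, including the one point that genuinely needs checking, namely that $\Gamma \le Aut(G) = Aut(\gbar)$ so a single $\Gamma$-distinguishing coloring serves both graphs. No issues.
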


\begin{proof}
The proof is similar to the proof of Theorem~\ref{NG-new}.
\end{proof}

We now turn to the question of characterizing those graphs that achieve equality in the upper bounds of identity of Theorem~\ref{orig-NG} and the related question of characterizing the analogous graphs for Theorem~\ref{NG-new}.

\begin{Def}  {\rm
A graph $G$  with $|V(G)| = n$ is an  \emph{NG-graph} if it satisfies $\chi(G) + \chi(\gbar) = n + 1$, and is an \emph{NGD-graph} if it satisfies $   \chi_D(G) + \chi_D(\gbar) = n + D(G)$. }
\label{NG-def}
\end{Def}

Proposition \ref{cmg-gbar} shows that all complete multipartite graphs, including $K_n$ and $\overline{K_t+I_{t-1}}$, are NGD-graphs.  However, they are not all NG-graphs, see Table \ref{small-table}  at the beginning of Section~4. 

\begin{Cor} 
\label{H-F-cor}
If $G$ is an NGD-graph with a fixed distinguishing coloring  using $D(G)$ colors, then each color class induces  an NG-graph.  
\end{Cor}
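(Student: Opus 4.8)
The plan is to reuse the very chain of inequalities from the proof of Theorem~\ref{NG-new}, but now read it as a squeeze: that proof bounds $\chi_D(G)+\chi_D(\gbar)$ above by $n+D(G)$ through two successive inequalities, and the hypothesis that $G$ is an NGD-graph pins the two ends of the chain to the same value $n+D(G)$, forcing every inequality in between to be an equality. The payoff is a term-by-term equality across the color classes, which is exactly the NG-graph condition.

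First I would fix the given distinguishing coloring with color classes $V_1,\dots,V_{D(G)}$ and set $G_i=G[V_i]$, so that $\overline{G_i}=\gbar[V_i]$; since the coloring attains the distinguishing number $D(G)$, minimality guarantees every $V_i$ is nonempty. The construction in the proof of Theorem~\ref{NG-new} shows that properly coloring each $G_i$ with a disjoint palette of $\chi(G_i)$ colors produces a proper coloring of $G$ that is again distinguishing: any color-preserving automorphism must fix each $V_i$ setwise because the palettes are disjoint across classes, hence it preserves the original distinguishing coloring and so is the identity. This yields $\chi_D(G)\le\sum_i\chi(G_i)$, and the identical argument applied to $\gbar$ yields $\chi_D(\gbar)\le\sum_i\chi(\overline{G_i})$.

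Next I would assemble the chain
\[
n+D(G)=\chi_D(G)+\chi_D(\gbar)\le\sum_{i=1}^{D(G)}\bigl(\chi(G_i)+\chi(\overline{G_i})\bigr)\le\sum_{i=1}^{D(G)}\bigl(|V_i|+1\bigr)=n+D(G),
\]
where the first inequality is the sum of the two bounds just established, the second is Theorem~\ref{orig-NG} applied to each $G_i$, and the final equality uses $\sum_i|V_i|=n$ together with there being $D(G)$ classes. Since the left and right ends coincide, both inequalities must in fact be equalities.

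The key step is then to upgrade the equality of sums $\sum_i\bigl(\chi(G_i)+\chi(\overline{G_i})\bigr)=\sum_i\bigl(|V_i|+1\bigr)$ to equality in each term. Because Theorem~\ref{orig-NG} gives $\chi(G_i)+\chi(\overline{G_i})\le|V_i|+1$ for every $i$, the total slack is a sum of nonnegative quantities equal to zero, so each slack vanishes and $\chi(G_i)+\chi(\overline{G_i})=|V_i|+1$ for all $i$; by Definition~\ref{NG-def} this says each $G_i$ is an NG-graph. I do not expect a genuine obstacle: the only delicate point is that the per-class recoloring really does yield a \emph{distinguishing} coloring of $G$ (and of $\gbar$), but that automorphism argument is already carried out in the proof of Theorem~\ref{NG-new}, which I would simply cite.
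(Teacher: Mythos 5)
Your proposal is correct and takes essentially the same approach as the paper: both rely on the recoloring construction from Theorem~\ref{NG-new} together with the Nordhaus--Gaddum bound $\chi(G_i)+\chi(\overline{G_i})\le |V_i|+1$ on each color class, and observe that any slack in a single class would force $\chi_D(G)+\chi_D(\gbar)<n+D(G)$. The paper phrases this as a one-line contradiction while you unfold it as a chain of inequalities squeezed into equalities, but the content is identical.
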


\begin{proof}
Let $G$ be an NGD-graph and fix a distinguishing coloring of $G$ using $D(G)$ colors:  $1,2,3, \ldots, D(G)$.    Let $V_i$ be the vertices of color $i$ and let $G_i = G[V_i]$.  If $\chi(G_i) + \chi(\overline{G_i}) < |V_i| + 1$ for any $i$, then following the proof of Theorem~\ref{NG-new}, we would have a distinguishing coloring of $G$ and $\gbar$ using fewer than $n + D(G)$ colors.  This contradicts the assumption that $G$ is an NGD-graph.
\end{proof}

\section{Characterizing NG-graphs}

In this section we focus on the ordinary chromatic number $\chi$ and the inequality
 $\chi(G) + \chi(\gbar) \le n + 1$ of Theorem~\ref{orig-NG}.   Our  main result of this section is a characterization of NG-graphs, that is, the graphs that satisfy this with equality.  Our characterization leads to a polynomial-time recognition algorithm for NG-graphs and an efficient computation of the  chromatic number of NG-graphs.

Finck~\cite{Fi66} characterizes the graphs that achieve equality for each of the four inequalities in Theorem~\ref{orig-NG}.  His characterizations involve arrays and in the case of NG-graphs,  he gives an induction proof based on $\chi(H)$ for certain induced subgraphs $H$ of $G$.  This proof is not constructive, nor does it lead to a polynomial-time algorithm for recognizing whether a given graph is an NG-graph.  Starr and Turner~\cite{StTu08} give a characterization of NG-graphs that is simpler to state but relies explicitly on $\chi(G)$ and thus also can not be used to recognize NG-graphs in polynomial-time.  Our characterization depends on partitioning vertices accordinng to their degree and leads to a polynomial-time algorithm to determine whether a graph is an NG-graph and if so to find its chromatic number.

\begin{Def}\rm 
If $G$ is an NG-graph, then the \emph{ABC-partition} of $V(G)$ is as follows:

 $A_G = \{v \in V(G): deg(v) = \chi(G) -1\}$
 
 $B_G = \{v \in V(G): deg(v) > \chi(G) -1\}$
 
 $C_G = \{v \in V(G): deg(v) < \chi(G) -1\}$
 
 When it is unambiguous, we write $A=A_G$, $B=B_G$, $C=C_G$.

\label{ABC-def}
\end{Def}

The following theorem characterizes NG-graphs and Figure~\ref{NG-fig}  illustrates the three possible forms.
\begin{Thm} \label{NG-charac} 
 A graph $G$  is an NG-graph if and only if when its vertex set is partitioned $V(G) = A_G \cup B_G \cup C_G$  we have
 
 (i) $A_G \neq \emptyset$ and  $G[A_G]$ is a clique, an independent set, or a 5-cycle
 
 (ii) $G[B_G]$ is a clique.
 
 (iii)  $G[C_G]$ is an independent set
 
 (iv)  $uv \in E(G)$ for all $u \in A_G$, $v \in B_G$ 
 
 (v)  $uw \not\in E(G)$ for all $u \in A_G$, $w \in C_G$.
 
\label{NG-equal-thm}
\end{Thm}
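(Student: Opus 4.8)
The plan is to prove both implications, using the complement symmetry of the statement to roughly halve the work. First note that complementation fixes $A_G$ and swaps $B_G$ with $C_G$: since $\deg_{\gbar}(v)=n-1-\deg_G(v)$ and the threshold for $A_{\gbar}$ is $\chi(\gbar)-1=n-k$ (where $k=\chi(G)$, using that $G$ is an NG-graph), one checks $A_{\gbar}=A_G$, $B_{\gbar}=C_G$, $C_{\gbar}=B_G$. Under this exchange conditions (ii) and (iii) are interchanged, (iv) and (v) are interchanged, and (i) is sent to itself (clique $\leftrightarrow$ independent set, $C_5\leftrightarrow C_5$). Hence it suffices to prove, for every NG-graph, that $A_G\neq\emptyset$, that $G[B_G]$ is a clique, that $A_G$–$B_G$ is complete, and that $G[A_G]$ is a clique, an independent set, or a $5$-cycle; conditions (iii) and (v) then follow by applying these to $\gbar$.

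For the backward direction I would compute directly. Assuming (i)–(v), the graph $G[A_G\cup B_G]$ is the complete join of $G[A_G]$ with the clique $B_G$, so $\chi(G[A_G\cup B_G])=\chi(G[A_G])+|B_G|$; and since every vertex of $C_G$ has degree $<\chi(G)-1$ it can be colored by greedy extension without new colors, giving $\chi(G)=\chi(G[A_G])+|B_G|$. The symmetric computation in $\gbar$ gives $\chi(\gbar)=\chi(\overline{G[A_G]})+|C_G|$. Finally each of $K_m$, $I_m$, $C_5$ is itself an NG-graph, so $\chi(G[A_G])+\chi(\overline{G[A_G]})=|A_G|+1$; adding the three equalities yields $\chi(G)+\chi(\gbar)=|A_G|+|B_G|+|C_G|+1=n+1$.

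The forward direction is the substance. Write $k=\chi(G)$ and $\theta=\chi(\gbar)=n+1-k$, and fix an optimal coloring $c\colon V\to\{1,\dots,k\}$ of $G$ together with an optimal coloring $\bar c\colon V\to\{1,\dots,\theta\}$ of $\gbar$ (equivalently, a minimum clique cover of $G$). Since any two vertices are adjacent in exactly one of $G$ and $\gbar$, the map $v\mapsto(c(v),\bar c(v))$ is injective, placing $V$ inside a $k\times\theta$ grid whose rows are independent sets of $G$ and whose columns are cliques of $G$; NG-equality forces exactly $(k-1)(\theta-1)$ empty cells. Optimality gives two saturation conditions: no two color classes may be merged, so $G$ has an edge between every pair of rows; dually, no two cliques may be merged, so there is a non-edge between every pair of columns. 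A vertex alone in its row therefore has a neighbor in each of the other $k-1$ rows and so has degree $\ge k-1$; a vertex alone in its column has a non-neighbor in each other column and so has degree $\le k-1$; thus a vertex alone in both its row and column has degree exactly $k-1$ and lies in $A_G$. I would use this to locate $A_G$, $B_G$, $C_G$ in the grid and to force the adjacencies — that $B_G$ induces a clique and is completely joined to $A_G$. Once all four adjacency conditions are secured, each $u\in A_G$ has all of $B_G$ and none of $C_G$ as neighbors, so $\deg_{G[A_G]}(u)=(k-1)-|B_G|$ is constant; hence $G[A_G]$ is regular of degree $\chi(G[A_G])-1=\Delta(G[A_G])$. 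Brooks' Theorem (Theorem~\ref{brooks-thm}) then forces every component of $G[A_G]$ to be a complete graph or an odd cycle, and applying the same argument to $\overline{G[A_G]}$ (which is likewise regular of degree $\chi-1$) eliminates all cases except a single clique, a single independent set, or $C_5$, the unique odd cycle equal to its own complement.

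The main obstacle is the middle of the forward argument: converting the grid and saturation data into the exact conditions (ii) and (iv), and into $A_G\neq\emptyset$. The grid fixes adjacency only within rows (none) and within columns (all), while the off-diagonal pairs are a priori free, and it is precisely the \emph{exactness} of $\chi(G)=k$ and $\chi(\gbar)=\theta$ — not the mere inequality — that must be exploited to force these edges into the claimed pattern. I expect this bookkeeping — tracking row sizes $|I|$ and column sizes $|Q|$ against the degree window $|Q|-1\le\deg(v)\le n-|I|$ for $v$ in row $I$ and column $Q$, and using the saturation conditions to rule out stray off-diagonal edges and non-edges — to be the delicate step, with Brooks' Theorem and the self-complementarity of condition~(i) cleanly settling the shape of $G[A_G]$ once the adjacencies are in place.
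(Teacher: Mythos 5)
Your backward direction is correct, and your observation that complementation fixes $A_G$ while swapping $B_G$ with $C_G$ (so that (iii) and (v) reduce to (ii) and (iv)) is exactly the paper's Lemma~\ref{C-lem}. Your endgame for condition (i) is also sound and arguably cleaner than the paper's: once (ii)--(v) are established, $G[A_G]$ and $\overline{G[A_G]}$ are each regular of degree one less than their chromatic numbers, and applying Brooks' Theorem to whichever of the two is connected pins down $K_m$, $I_m$, or $C_5$ without the component-by-component case analysis of Lemma~\ref{GA2-lem}.

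The gap is exactly where you flag it, and it is the substance of the theorem: you never actually derive (ii), (iv), or $A_G\neq\emptyset$. The $k\times\theta$ grid and the two saturation conditions only give information about rows and columns as wholes (some edge between every pair of color classes, some non-edge between every pair of cover-cliques); a vertex of $B_G$ need not be alone in its row and a vertex of $C_G$ need not be alone in its column, so the degree window $|Q|-1\le\deg(v)\le n-|I|$ does not by itself locate $B_G$ and $C_G$ in the grid, let alone force $G[B_G]$ to be a clique completely joined to $A_G$ --- and the grid is not even canonical, since it depends on the chosen pair of optimal colorings. What the paper uses here, and what is absent from your sketch, is a color-criticality argument: if $\deg(y)>\chi(G)-1$ in an NG-graph then $y$ is color-critical in $G$ but not in $\gbar$ (Lemma~\ref{idea-lem}), so $G-y$ is again an NG-graph with $\chi(G-y)=\chi(G)-1$; iterating this on a putative non-edge $xy$ with $y\in B_G$ and $x\in A_G\cup B_G$ produces a proper coloring of $G-x-y$ with $\chi(G)-2$ colors, and assigning $x$ and $y$ one common new color contradicts $\chi(G)=k$ (Lemma~\ref{B-edges}). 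The case $A_G=\emptyset$ is then excluded by a separate direct computation showing $\chi(G)=|B_G|$ and $\chi(\gbar)=|C_G|$, whence $\chi(G)+\chi(\gbar)=n$. Until you supply arguments of this kind, or genuinely extract them from the grid bookkeeping you describe, the forward direction is a plan rather than a proof.
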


\begin{figure}[htbp]
\begin{center}

\begin{picture}(200,100)(0,30)
\thicklines
\put(50,100){\circle{40}} 
\put(100,50){\circle{40}} 
\put(150,100){\circle{40}} 

 \put(15,110){$A$}
 \put(175,110){$B$}
 \put(125,30){$C$}
 
 \put(95,47){$I_c$}
  \put(143,97){$K_b$}
  \put(43,110){$K_a,$}
   \put(33,95){$I_a$ or }

 \put(56,95){$C_5$}
 
 \put(70,100){\line(1,0){60}}
 \put(90,105){all}

\put(56,80){\line(1,-1){24}}
\put(47,60){none}

\put(144,80){\line(-1,-1){24}}
\put(140,60){?}
  
\end{picture}

\end{center}
\caption{The forms of an NG-graph}
 
\label{NG-fig}
\end{figure}
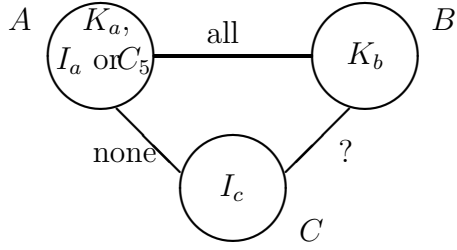

\begin{proof}
\noindent
($\Longleftarrow$) Let $A=A_G, B=B_G$, and $C=C_G$. 
In the case that  $G[A]$ is a clique, we can write  
$G[A] = K_a$,  $G[B] = K_b$, and $G[C] = I_c $ for some integers $a,b,c$ where $a \ge 1$.   We observe that  $\chi(G) = a+b$ since we need $a+b$ colors for $A \cup B$ and we may reuse a color from $A$ for all vertices in $C$.
In addition, $\chi(\gbar) = c+1$  since we need   $c$ colors for $C$ and one new color for $A \cup B$.  We have  $\chi(G) + \chi(\gbar) = a+b+c + 1 = n+1$,  so $G$ is an NG-graph.

In the case that  $G[A]$ is an independent set, let $G[A] = I_a$,  $G[B] = K_b$, and $G[C] = I_c$ where $a,b,c$ are integers with $a \ge 1$.   Then $\chi(G) = b+1$ ($b$ colors for $B$, one for the rest of the vertices), 
$\chi(\gbar) = a+c$   ($a+c$ colors for $A \cup C$, then reuse   a color from $A$ for all of $B$).  Thus  $\chi(G) + \chi(\gbar) = b + 1 + a + c  =  n+1$,   so $G$ is an NG-graph.

Finally, if $G[A]$ is a 5-cycle, choose let
$G[A] = C_5$,  $G[B] = K_b$, and $G[C] = I_c $ where $b$ and $c$ are integers.   Then $\chi(G) = b+3$ (reuse a color from $A$ for all vertices of $C$),
$\chi(\gbar) = c+3$ (reuse a color from $A$ for all vertices of $B$), and $n = 5 + b + c$,  so $G$ is an NG-graph.

We prove the converse of Theorem~\ref{NG-equal-thm} after a series of lemmas.  

\begin{lemma}
If $x$ is a vertex in an NG-graph $H$ and $deg(x) > \chi(H) - 1$ then $deg_{\Hbar}(x) < \chi(\Hbar) - 1$.  Moreover, $x$ is color-critical in $H$ but not in $\Hbar$.
\label{idea-lem}
\end{lemma}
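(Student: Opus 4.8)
The plan is to prove the three assertions in the order: first the degree inequality, then the non-criticality of $x$ in $\Hbar$, and finally the criticality of $x$ in $H$, since each step feeds the next. The degree inequality is a direct computation. Since $x$ has $n-1$ potential neighbors, $deg(x) + deg_{\Hbar}(x) = n - 1$. The hypothesis $deg(x) > \chi(H) - 1$ gives $deg_{\Hbar}(x) = n - 1 - deg(x) < n - \chi(H)$, and the NG-equality $\chi(H) + \chi(\Hbar) = n + 1$ rewrites $n - \chi(H)$ as $\chi(\Hbar) - 1$, yielding $deg_{\Hbar}(x) < \chi(\Hbar) - 1$, as desired.

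Next I would show $x$ is not color-critical in $\Hbar$, i.e. $\chi(\Hbar - x) = \chi(\Hbar)$. Suppose instead $\chi(\Hbar - x) \le \chi(\Hbar) - 1$. Take a proper coloring of $\Hbar - x$ with $\chi(\Hbar) - 1$ colors. Since $x$ has fewer than $\chi(\Hbar) - 1$ neighbors in $\Hbar$, its neighbors occupy at most $\chi(\Hbar) - 2$ colors, so at least one of the $\chi(\Hbar) - 1$ colors remains free for $x$. Extending the coloring to $x$ produces a proper $(\chi(\Hbar)-1)$-coloring of $\Hbar$, contradicting the minimality of $\chi(\Hbar)$. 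Hence $\chi(\Hbar - x) = \chi(\Hbar)$, and $x$ is not color-critical in $\Hbar$.

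Finally, to show $x$ is color-critical in $H$, I would apply the Nordhaus-Gaddum upper bound (Theorem~\ref{orig-NG}) to the graph $H - x$, which has $n - 1$ vertices, giving $\chi(H - x) + \chi(\overline{H - x}) \le n$. Using $\overline{H - x} = \Hbar - x$ together with the previous step's identity $\chi(\Hbar - x) = \chi(\Hbar)$, this becomes $\chi(H - x) \le n - \chi(\Hbar)$, and the NG-equality again converts $n - \chi(\Hbar)$ into $\chi(H) - 1$. Thus $\chi(H - x) \le \chi(H) - 1 < \chi(H)$, so $x$ is color-critical in $H$.

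The main obstacle is getting the order of dependencies right rather than any single hard estimate: the criticality of $x$ in $H$ is deduced from the non-criticality of $x$ in $\Hbar$, which in turn rests on the degree inequality. The only genuine tool beyond arithmetic is the Nordhaus-Gaddum bound applied to the $(n-1)$-vertex graph $H - x$; everything else follows from the standard greedy-extension argument for proper colorings and the defining equality of an NG-graph.
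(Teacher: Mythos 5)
Your proof is correct and follows essentially the same route as the paper: the degree computation from the NG-equality, then non-criticality of $x$ in $\Hbar$ (the paper waves at ``a simple induction argument'' where you spell out the standard greedy extension, which is fine), then applying the Nordhaus--Gaddum upper bound to $H-x$ to force $\chi(H-x) < \chi(H)$. No gaps.
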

\begin{proof}
Using the assumption that $H$ is an NG-graph and the given degree condition we have
$\chi(\Hbar) - 1 = |V(H)| - \chi(H) > |V(H)| - (deg(x) + 1) = deg_{\Hbar}(x)$, thus 
$deg_{\Hbar}(x) < \chi(\Hbar) - 1 $.  A simple induction argument shows that $x$ is not color-critical in $\Hbar$ thus $\chi({\Hbar} -x) = \chi(\Hbar)$.  Applying Theorem~\ref{orig-NG} to the graph $H-x$ yields
$\chi(H-x) + \chi(\Hbar - x) \le (n-1) + 1 = n$.  If $\chi(H-x) = \chi(H)$ then 
$\chi(H) + \chi(\Hbar) \le n$, a contradiction because $H$ is an NG-graph.  Thus $\chi(H-x) < \chi(H)$ and $x$ is color-critical in $H$.
This completes the proof of all the assertions of Lemma~\ref{idea-lem}.
\end{proof}

\begin{lemma}
\label{B-edges}
Suppose $G$ is an NG-graph and define $A_G, B_G$ and $C_G$ as in Definition~\ref{ABC-def}.  
  For any $y \in B_G$ and any $x \in A_G \cup B_G$ we have $xy\in E(G)$.
 \end{lemma}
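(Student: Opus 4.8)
The plan is to argue by contradiction: assume there is a non-edge $xy \notin E(G)$ with $y \in B_G$ and $x \in A_G \cup B_G$, and use this non-edge together with the color-criticality supplied by Lemma~\ref{idea-lem} to manufacture a proper $(\chi(G)-1)$-coloring of $G$. Throughout I write $k = \chi(G)$, so that $\chi(\gbar) = n + 1 - k$ since $G$ is an NG-graph.

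First I would show that \emph{deleting $y$ again yields an NG-graph}. Because $y \in B_G$ we have $\deg(y) > k - 1$, so Lemma~\ref{idea-lem} tells us that $y$ is color-critical in $G$ (hence $\chi(G - y) = k - 1$) and \emph{not} color-critical in $\gbar$ (hence $\chi(\gbar - y) = \chi(\gbar)$). Adding these gives $\chi(G-y) + \chi(\overline{G-y}) = (k-1) + (n+1-k) = n = (n-1)+1$, so $G - y$ is an NG-graph with chromatic number $k-1$; in particular it carries its own ABC-partition in the sense of Definition~\ref{ABC-def}, now with degree threshold $k - 2$.

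The key observation is that the deleted vertex $y$ is a non-neighbor of $x$, so $\deg_{G-y}(x) = \deg_G(x) \ge k - 1$; since the threshold in $G - y$ has dropped to $k - 2$, this places $x$ in $B_{G-y}$ regardless of whether $x$ started in $A_G$ or $B_G$. Applying Lemma~\ref{idea-lem} \emph{inside} the NG-graph $G - y$ then shows that $x$ is color-critical there, so $\chi(G - x - y) = (k-1) - 1 = k - 2$. Finally I would take an optimal $(k-2)$-coloring of $G - x - y$ and give both $x$ and $y$ a single new color: this is legitimate precisely because $xy \notin E(G)$ and every neighbor of $x$ or of $y$ already lives inside $G - x - y$. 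The result is a proper $(k-1)$-coloring of $G$, contradicting $\chi(G) = k$ and forcing $xy \in E(G)$.

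I expect the main obstacle to be the bookkeeping that promotes a former $A_G$-vertex to a $B_{G-y}$-vertex: it is essential that $x$ be a non-neighbor of $y$ (so its degree is unchanged) while the chromatic number, and hence the degree threshold, drops by exactly one when $y$ is removed. Everything else — that $G - y$ is again an NG-graph, and that two non-adjacent critical vertices can safely share one fresh color — then follows cleanly from Lemma~\ref{idea-lem} and the definition of color-criticality.
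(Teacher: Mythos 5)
Your proposal is correct and follows essentially the same argument as the paper: delete the color-critical vertex $y$, observe that the non-neighbor $x$ then satisfies the degree hypothesis of Lemma~\ref{idea-lem} in $G-y$, and recombine $x$ and $y$ into one new color class to obtain a proper $(\chi(G)-1)$-coloring. Your explicit verification that $G-y$ is again an NG-graph (using that $y$ is not color-critical in $\gbar$) is a step the paper leaves implicit but is needed to apply Lemma~\ref{idea-lem} to $G-y$, so including it is a small improvement rather than a divergence.
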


\begin{proof}
For a contradiction, assume $xy \not\in E(G)$.  By the definition of $B_G$ and Lemma~\ref{idea-lem} we know $y$ is color-critical in $G$ and thus $\chi(G-y) = \chi(G) -1$.  Then 
$$deg_{G-y}(x) = deg_G(x) \ge \chi(G) - 1 = \chi(G-y) > \chi(G-y)-1.$$ 
By Lemma~\ref{idea-lem}, $x$ is color-critical in $G-y$, so $\chi(G-y-x) = \chi(G-y) - 1 = \chi(G) - 2$.  Now we can properly color $G$ using $\chi(G)-1$ colors by taking a proper coloring of $G-x-y$ using $\chi(G) -2$ colors and using one additional color for $x$ and $y$, a contradiction.
\end{proof}

\begin{lemma}
\label{C-lem}
Suppose $G$ is an NG-graph and   $A=A_G,  \  B=B_G$ and $C=C_G$ are defined as in Definition~\ref{ABC-def}.  
Then $G[C]$ is an independent set and there are no edges in $G$ between vertices of $A$ and vertices of $C$.
 \end{lemma}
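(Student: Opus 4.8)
The plan is to exploit the symmetry between $G$ and $\gbar$. Since the defining equation $\chi(G) + \chi(\gbar) = n+1$ of an NG-graph is symmetric in $G$ and its complement, $\gbar$ is itself an NG-graph, and so it too has an ABC-partition $V(\gbar) = A_{\gbar} \cup B_{\gbar} \cup C_{\gbar}$ in the sense of Definition~\ref{ABC-def}. The heart of the argument will be to relate this partition of $\gbar$ to the partition of $G$, so that the information Lemma~\ref{B-edges} already gives us about the ``$B$'' part of an NG-graph can be transported to the ``$C$'' part.

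First I would record the duality between the two partitions. For every vertex $v$ we have $deg_G(v) + deg_{\gbar}(v) = n-1$, and because $G$ is an NG-graph, $\chi(\gbar) - 1 = n - \chi(G)$. Substituting, $deg_G(v) = \chi(G)-1$ exactly when $deg_{\gbar}(v) = \chi(\gbar)-1$, while $deg_G(v) > \chi(G)-1$ exactly when $deg_{\gbar}(v) < \chi(\gbar)-1$, and symmetrically with the inequalities reversed. This yields the identities
\[ A_G = A_{\gbar}, \qquad B_G = C_{\gbar}, \qquad C_G = B_{\gbar}. \]
In particular, the set $C_G$ that we must analyze in $G$ is precisely the set $B_{\gbar}$ that Lemma~\ref{B-edges} already controls inside $\gbar$, and $A_G$ coincides with $A_{\gbar}$.

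With the duality in hand, both assertions follow directly from Lemma~\ref{B-edges} applied to the NG-graph $\gbar$. For the first, take two distinct vertices $x, y \in C_G = B_{\gbar}$; since $x \in A_{\gbar} \cup B_{\gbar}$ and $y \in B_{\gbar}$, Lemma~\ref{B-edges} gives $xy \in E(\gbar)$, so $C_G$ induces a clique in $\gbar$ and hence an independent set in $G$. For the second, take $u \in A_G = A_{\gbar}$ and $w \in C_G = B_{\gbar}$; applying Lemma~\ref{B-edges} to $\gbar$ with the roles $y = w$ and $x = u$ yields $uw \in E(\gbar)$, so $u$ and $w$ are non-adjacent in $G$, i.e.\ there are no edges between $A_G$ and $C_G$. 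I expect the only substantive step to be spotting the partition duality; once $C_G = B_{\gbar}$ and $A_G = A_{\gbar}$ are available, Lemma~\ref{B-edges} does all the combinatorial work, and the remaining obstacle is merely the routine degree bookkeeping verifying the three set identities.
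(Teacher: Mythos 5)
Your proposal is correct and follows essentially the same route as the paper: pass to the complement, establish the duality $A_G = A_{\gbar}$, $B_G = C_{\gbar}$, $C_G = B_{\gbar}$ via the degree identity $deg_G(v) + deg_{\gbar}(v) = n-1$ and the NG-graph equation, then apply Lemma~\ref{B-edges} to $\gbar$. No gaps.
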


\begin{proof}
Given that $G$ is an NG-graph,  Definition~\ref{NG-def} implies that $\gbar$ is also an NG-graph.  Let
$A'= A_{\gbar} = \{v:deg_{\gbar}(v) = \chi(\gbar) -1 \}$,   $B'= B_{\gbar}=\{v:deg_{\gbar}(v) > \chi(\gbar) -1 \}$,  and $C'= C_{\gbar} = \{v:deg_{\gbar}(v) < \chi(\gbar) -1 \}$.  Combining  Definition~\ref{NG-def}  with 
$deg_G(v) + deg_{\gbar}(v) = n-1$ yields $(deg_G(v) - \chi(G) + 1) + (deg_{\gbar}(v) - \chi(\gbar) + 1) = 0$.   Thus $A' = A$, $B' = C$ and $C' = B$.

Now Lemma~\ref{B-edges}  applied to $\gbar$ implies that $\gbar[B']$ is complete, hence $G[C]$ is an independent set.  For $x \in A'$ and $y \in B'$ the same lemma tells us that $xy \in E(\gbar)$ and thus for $x \in A$ and $y \in C$ we know $xy \not\in E(G)$.   \end{proof}
 
\begin{lemma}
\label{GA-lem}
 If $G$ is an NG-graph and  $A= A_G$  and $B=B_G$ are defined  as  in Definition~\ref{ABC-def}, and $A \neq \emptyset$ then $H=G[A]$ is an NG-graph and $\chi(G) = \chi(H) + |B|$.
 \end{lemma}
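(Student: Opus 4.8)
The plan is to prove the chromatic-number identity first, since it holds for \emph{any} NG-graph with $A\neq\emptyset$ and does not require knowing in advance that $H$ is itself an NG-graph; I can then apply that identity to $\gbar$ and combine the two.

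First I would pin down the local structure of $G$ from the lemmas already established. Lemma~\ref{B-edges} tells us that $B$ is a clique and that every vertex of $B$ is adjacent to every vertex of $A\cup B$, so $G[A\cup B]$ is the join of $H=G[A]$ with the clique on $B$. Lemma~\ref{C-lem} tells us that $C$ is independent and has no edges to $A$. To obtain the upper bound $\chi(G)\le \chi(H)+|B|$, I would color $A$ optimally with $\chi(H)$ colors, assign the vertices of $B$ their own $|B|$ brand-new colors, and color each vertex of $C$ by reusing a color already used on $A$; this is legitimate because $C$ is independent and has no neighbor in $A$, while the colors on $B$ are disjoint from those on $A\cup C$. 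For the reverse inequality I would use $\chi(G)\ge \chi(G[A\cup B])$ together with the fact that the chromatic number of a join is the sum of the chromatic numbers, giving $\chi(G[A\cup B])=\chi(H)+|B|$. Hence $\chi(G)=\chi(H)+|B|$.

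Next I would apply this very identity to $\gbar$, which is permissible because $\gbar$ is again an NG-graph and, by the partition computation carried out in the proof of Lemma~\ref{C-lem}, its ABC-partition satisfies $A_{\gbar}=A$, $B_{\gbar}=C$, and $C_{\gbar}=B$; in particular $A_{\gbar}=A\neq\emptyset$, so the hypothesis of the identity is met. Since $\gbar[A]=\Hbar$, the identity applied to $\gbar$ reads $\chi(\gbar)=\chi(\Hbar)+|C|$. Adding the two identities then yields
\[
\chi(H)+\chi(\Hbar)=\bigl(\chi(G)+\chi(\gbar)\bigr)-|B|-|C|=(n+1)-|B|-|C|=|A|+1,
\]
where I use $\chi(G)+\chi(\gbar)=n+1$ (as $G$ is an NG-graph) and $n=|A|+|B|+|C|$. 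This shows $H$ is an NG-graph and simultaneously records $\chi(G)=\chi(H)+|B|$.

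The main point to be careful about is the order of operations rather than any single hard step: the identity $\chi(G)=\chi(H)+|B|$ must be derived purely from the join/independence structure, \emph{without} assuming $H$ is an NG-graph, so that invoking it for $\gbar$ introduces no circularity. The two remaining technical checks are the identification $\gbar[A]=\Hbar$ (which is the subset version of the $\gbar-v=\overline{G-v}$ remark) and the correct translation of the ABC-partition under complementation, both of which are already available from Lemma~\ref{C-lem}.
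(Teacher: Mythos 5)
Your proposal is correct and follows essentially the same route as the paper: derive $\chi(G)=\chi(H)+|B|$ and $\chi(\gbar)=\chi(\Hbar)+|C|$ from the structure given by Lemmas~\ref{B-edges} and \ref{C-lem}, then add the two identities and use $\chi(G)+\chi(\gbar)=n+1$ to conclude $\chi(H)+\chi(\Hbar)=|A|+1$. The only difference is that you spell out the coloring argument and the complementation of the ABC-partition explicitly, which the paper leaves implicit.
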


\begin{proof}
From the structure of graph $G$ deduced in Lemmas~\ref{B-edges} and \ref{C-lem}, we know $\chi(G) = \chi(H) + |B|$  and $\chi(\gbar) = \chi(\Hbar) + |C|$.  Thus 
$1 + |A| + |B| + |C| = 1+n = \chi(G) + \chi(\gbar) =  \chi(H) + \chi(\Hbar)  + |B| + |C|$, so 
$1 + |A| = \chi(H) + \chi(\Hbar)$ and $H$ is an NG-graph as desired.

\end{proof}

\begin{lemma}
\label{GA2-lem}
 If $G$ is an NG-graph and with $A=A_G,  B=B_G$ and $C=C_G$ are defined as in Definition~\ref{ABC-def},  
and $A \neq \emptyset$ then $H=G[A]$ is either a $5$-cycle, a complete graph, or an independent set.

 \end{lemma}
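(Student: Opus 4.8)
The plan is to show that the degree hypothesis defining $A$, combined with the structural Lemmas~\ref{B-edges}, \ref{C-lem}, and \ref{GA-lem}, forces $H = G[A]$ to be \emph{regular} of degree $\chi(H)-1$, and then to invoke Brooks' Theorem. First I would compute $deg_H(v)$ for an arbitrary $v \in A$. By Lemma~\ref{B-edges} every vertex of $A$ is adjacent to every vertex of $B$, and by Lemma~\ref{C-lem} no vertex of $A$ is adjacent to any vertex of $C$, so the neighbors of $v$ in $G$ are exactly its neighbors inside $A$ together with all of $B$; that is, $deg_G(v) = deg_H(v) + |B|$. Since $v \in A$ we have $deg_G(v) = \chi(G)-1$, while Lemma~\ref{GA-lem} gives $\chi(G) = \chi(H) + |B|$. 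Combining these yields $deg_H(v) = \chi(H)-1$ for every $v \in A$, so $H$ is $(\chi(H)-1)$-regular and in particular $\Delta(H) = \chi(H)-1 < \chi(H)$.

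Next I would treat the connected case. Because $\Delta(H) < \chi(H)$, a graph $H$ that is connected cannot satisfy $\chi(H) \le \Delta(H)$, so Brooks' Theorem (Theorem~\ref{brooks-thm}) forces $H$ to be a complete graph or an odd cycle. If $H$ is complete we are done, so suppose $H = C_{2m+1}$. By Lemma~\ref{GA-lem}, $H$ is an NG-graph, hence $\chi(H) + \chi(\Hbar) = |V(H)|+1$; since $\chi(C_{2m+1}) = 3$ (the case $m=1$ being $K_3$, already covered), this reads $3 + \chi(\Hbar) = 2m+2$, so $\chi(\Hbar) = 2m-1$. But $\chi(\Hbar)$ equals the clique cover number of $C_{2m+1}$, which for $m \ge 2$ is $m+1$ (a maximum matching of $m$ edges plus one leftover vertex). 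Thus $m+1 = 2m-1$, giving $m = 2$, so the only odd cycle that arises is $C_5$. This settles the connected case.

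Finally I would reduce the disconnected case to the connected one by complementation. If $H$ is disconnected then $\Hbar$ is connected. Since $H$ is an NG-graph so is $\Hbar$, and complementing the regularity of $H$ shows $\Hbar$ is $(|V(H)|-\chi(H))$-regular while $\chi(\Hbar) = |V(H)|+1-\chi(H)$, so $\Hbar$ is itself $(\chi(\Hbar)-1)$-regular. Applying the connected case to $\Hbar$ shows $\Hbar$ is a complete graph or $C_5$. The possibility $\Hbar = C_5$ would give $H = \overline{C_5} = C_5$, which is connected, contradicting that $H$ is disconnected; hence $\Hbar$ is complete and $H$ is an independent set. In every case $H = G[A]$ is a complete graph, an independent set, or a $5$-cycle, as claimed.

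The main obstacle is the first step: recognizing that the degree condition defining $A$, together with the already-established edge structure among $A$, $B$, $C$, pins $H$ down to being regular of degree exactly $\chi(H)-1$. This is precisely the deficiency $\Delta(H) < \chi(H)$ needed to trigger Brooks' Theorem, after which everything is routine. The only genuine computation is the clique cover number of odd cycles used to isolate $C_5$, and the complementation trick disposes of the disconnected case without any additional work.
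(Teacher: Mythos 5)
Your proof is correct, and its first half coincides with the paper's: you compute $deg_H(v)=\chi(H)-1$ for every $v\in A$ exactly as the paper does (using Lemmas~\ref{B-edges}, \ref{C-lem}, \ref{GA-lem}), invoke Brooks' Theorem in the connected case, and eliminate odd cycles other than $C_3$ and $C_5$ by the same arithmetic (your clique-cover computation $\chi(\overline{C_{2m+1}})=m+1$ is just the paper's $\chi(\Hbar)=k+1$ in different clothing). Where you genuinely diverge is the disconnected case. The paper analyzes components directly: it picks a component $H_1$ of maximum chromatic number, applies Brooks to $H_1$, and then grinds through two cases ($H_1$ complete, $H_1$ an odd cycle) using the NG identity and the constant-degree condition to force $H$ to be an independent set or reach a contradiction. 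You instead observe that $\Hbar$ is connected, is again an NG-graph, and is again $(\chi(\Hbar)-1)$-regular (since $|V(H)|-\chi(H)=\chi(\Hbar)-1$ by the NG identity), so the already-proved connected case applies to $\Hbar$; since $\overline{C_5}=C_5$ is connected, $\Hbar$ must be complete and $H$ independent. This complementation reduction is shorter and avoids the component case analysis entirely; its only cost is that one must check (as you implicitly do) that the connected-case argument uses nothing about $H$ beyond connectivity, the NG property, and $(\chi-1)$-regularity, all of which transfer to $\Hbar$. Both arguments are sound; yours is the more economical of the two.
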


\begin{proof}
Let $v \in A$.  By our definition of the set $A$ we know $deg_G(v) = \chi(G) - 1$.  To compute the degree of $v$ in $H$, we subtract all neighbors in $B$ (and $C$),  and apply Lemmas~\ref{B-edges}, \ref{C-lem} and \ref{GA-lem} to obtain 
$deg_H(v) = \chi(G) -1 - |B| = \chi(H) - 1$.  Since this holds for all $v \in A$ we have 
$\Delta(H)  = \chi(H) -1$.  Now apply Brooks' Theorem (Theorem \ref{brooks-thm}). 

If $H$ is connected we conclude that $H$ is an odd cycle or a complete graph.  First consider the case $H = C_{2k+1}$.    If $k \ge 3$ we have $\chi(H) = 3$ and $\chi(\Hbar) = k+1$ so 
$\chi(H) + \chi(\Hbar) = k+4 \le k + (k+1) = 2k+1 = n$ which contradicts Lemma~\ref{GA-lem}.  If $k=1$ then $H$ is the complete graph $C_3$, hence we conclude that either $H$ is a 5-cycle ($k=2$) or $H$ is a complete graph.

Next we consider the case that $H$ is not connected.  Let $H_1$ be a component with maximum chromatic number and let $H_2$ be the rest of $H$, so $H_2 $ is not empty.  Then $\chi(H) = \chi(H_1)$ and every vertex in $H_1$ has degree $\Delta(H_1)=\Delta(H)=\chi(H)-1=\chi(H_1)-1$, so by Brooks' Theorem, $H_1$ is an odd cycle or a complete graph. 

\smallskip

\noindent
Case 1:    $H_1 = K_r$ for some $r \ge 1$.   Then $\chi(H) = \chi(H_1) = r$,  so $\chi(\Hbar)  = 1 + \chi(\overline{H_2})$.  By Lemma~\ref{GA-lem}, we know $H$ is an NG-graph, so 
$r  + |V(H_2)|  + 1 =  \chi(H) + \chi(\Hbar) = r + 1 + \chi(\overline{H_2})$.  Thus $ |V(H_2)| =  \chi(\overline{H_2})$ and $H_2$ is an independent set.   

By definition of $A$,  every vertex in $H$  has degree $\chi(H) - 1$.   Since $H_2 \neq \emptyset$, each vertex in $H_2$ has degree 0 in $H$, thus $\chi(H) - 1 = 0$ and $H$ is an independent set.

\smallskip

\noindent
Case 2:   $H_1 = C_{2k+1}$ for some $k \ge 2$.
In this case, $\chi(H) = \chi(H_1) = 3$ and $\chi(\Hbar) = \chi(C_{\overline{2k+1}}) + \chi(\overline{H_2}) = k+1 + \chi(\overline{H_2}) $.  Again, using Lemma~\ref{GA-lem}, we know $H$ is an NG-graph, so 
$  2k+1   + |V(H_2)| + 1  =  \chi(H) + \chi(\Hbar) =  3 + k+1 + \chi(\overline{H_2})$.   Thus $2-k =  |V(H_2)|  -  \chi(\overline{H_2}) \ge 0$ and $k \le 2$.  By the assumptions of this case, $k \ge 2$ so $k=2$ and the cycle $C_{2k+1}$ is a 5-cycle.    Substituting $k=2$ into the equation above yields $ |V(H_2)| =  \chi(\overline{H_2}) $, so $H_2$ is an independent set.   Now vertices in $H_1$ have degree 2  in $H$ and vertices in $H_2$ have degree 0 in $H$, contradicting the definition of $A$.
\end{proof}

Now we are ready to prove the other direction of Theorem~\ref{NG-equal-thm}.

\noindent
($\Longrightarrow$)
Suppose we are given an NG-graph $G$ and let $A = A_G$, $B = B_G$, $C=C_G$ as in Definition~\ref{ABC-def}.  We show that the partition   $V(G) = A \cup B \cup C$   satisfies $(i) - (v)$ of Theorem~\ref{NG-equal-thm}.  Conditions (ii) and (iv) follow from Lemma~\ref{B-edges} and conditions (iii) and (v) follow from Lemma~\ref{C-lem}.  If $A \neq \emptyset$, condition (i) follows from Lemma~\ref{GA2-lem}.  

Finally, we consider the case $A = \emptyset$, and show it leads to a contradiction.     Suppose $A = \emptyset$,   so  $V(G) =  B \cup C$  where  $G[B] = K_b$, and    $G[C] = I_c$.   First observe that  that $\chi(G) \ge b$ because $G$ contains $G[B] = K_b$.  In fact,  $\chi(G) =  b$ since, by definition, each vertex in $C$ has degree at most   $ \chi(G) - 1 \ge b-1$,  and since $\chi(G) - 1 \ge b-1$,   each vertex in $C$ can be colored using one of the $b$ colors not appearing among its neighbors.    Similarly, we show $\chi(\gbar) = c$.  We know $\gbar[B] = I_b$ and $\gbar[C] = K_c$, so $\chi(\gbar) \ge c$ and each vertex of $C$ requires its own color.  By 
 Lemma~\ref{idea-lem}, we know each vertex $x \in B$ has $deg_{\gbar}(x) \le \chi(\gbar) - 1$, so no additional colors are needed for vertices in $B$ and $\chi(\gbar) = c$. Thus $\chi(G) + \chi(\gbar) = b+c = |V(G)|$,  contradicting the assumption that   $G$ is an NG-graph.
\end{proof}

We next present an algorithm for determining whether a graph is an NG-graph and in the affirmative case, computing its chromatic number.  The proof of correctness and an analysis of the complexity are given in Theorem~\ref{alg-correct}.

\noindent
{\bf Algorithm: NG}

\noindent
Input:  A graph $G$ with $n = |V(G)|$. 

\noindent
Output:  A determination of whether $G$ is an NG-graph and if so, its chromatic number.

\noindent
Initialize $k = 1$.

Loop: 
 Partition $V(G)$ according to vertex degrees as follows:

$A = \{v \in V(G): deg(v) = k-1\}$
 
$B = \{v \in V(G): deg(v) > k-1\}$
 
$C = \{v \in V(G): deg(v) < k -1\}$.  

Consider   questions  (i) -- (v). 
If the answer to any of the questions is `no', continue to step $(\ast)$.  Otherwise (if all answers are yes), go to step (vi).

\smallskip
\noindent
 (i) Is $G[A]$ a complete graph, an independent set or a 5-cycle?
 
 \smallskip
\noindent
 (ii) Is $G[B]$ complete?
 
 \smallskip
\noindent
 (iii) Is $G[C]$ an independent set?
 
 \smallskip
\noindent
 (iv) Is $ab \in E(G)$ for all $a\in A$ and all $b \in B$?
 
 \smallskip
\noindent
 (v) Is $ac  \not\in E(G)$ for all $a\in A$ and all $c \in C$?
 
 If the answers to (i) -- (v)  are `yes', then check if $\chi(G) = k$ as follows: 
  
\noindent (vi)   For $G[A]$   a complete graph, check that  $|A| + |B| =k$.
For $G[A]$   an independent set, check that $  |B| + 1=k$.
 For $G[A]$   a 5-cycle, check that $  |B| + 3=k$.
 
 If (vi) is affirmative, then  graph $G$ is an NG-graph and $\chi(G) = k$ and   the algorithm ends.   If not, continue to ($\ast$). 
 
\noindent ($\ast$)  If $k < n$, increment $k:= k+1$ and return to the beginning of the loop.
If $k=n+1$, then graph  $G$ is not an NG-graph.

\begin{Thm}
\label{alg-correct}
Algorithm NG determines whether graph $G$ is an NG-graph in polynomial time.
\end{Thm}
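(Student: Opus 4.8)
The plan is to reduce the correctness of Algorithm~NG entirely to the characterization in Theorem~\ref{NG-charac}, and to bound the running time by a direct count of the work done in each pass of the loop. The conceptual subtlety is that the algorithm cannot form the ABC-partition of Definition~\ref{ABC-def} outright, since it does not know $\chi(G)$; instead it tries every candidate value $k\in\{1,\dots,n\}$ for $\chi(G)$ and, for each, builds the degree partition $A,B,C$ using the threshold $k-1$ in place of $\chi(G)-1$. I would therefore split correctness into two claims: (\emph{completeness}) if $G$ is an NG-graph then the algorithm accepts, and (\emph{soundness}) if the algorithm accepts at a value $k$ then $G$ is an NG-graph and $\chi(G)=k$.

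For completeness I would take $k=\chi(G)$ and observe that the algorithm's threshold-$(k-1)$ partition then coincides exactly with the ABC-partition, so by Theorem~\ref{NG-charac} conditions (i)--(v) hold and the algorithm reaches step (vi). To see (vi) succeeds I would quote the chromatic-number formulas proved in the forward ($\Longleftarrow$) direction of Theorem~\ref{NG-charac}: when $G[A]$ is a clique, an independent set, or a $5$-cycle one has $\chi(G)=|A|+|B|$, $\chi(G)=|B|+1$, or $\chi(G)=|B|+3$ respectively, which are precisely the equalities tested in (vi) at $k=\chi(G)$. For soundness I would note that those same counting formulas never use \emph{how} the partition was produced, only the adjacency structure recorded in (i)--(v); hence whenever (i)--(v) hold for the threshold-$(k-1)$ partition and $A\neq\emptyset$, the graph $G$ is an NG-graph whose chromatic number equals the structurally determined value $|A|+|B|$, $|B|+1$, or $|B|+3$. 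Condition (vi) forces that value to equal $k$, so $\chi(G)=k$; since $\chi(G)$ is a fixed invariant, at most one $k$ can pass all checks and it is $k=\chi(G)$. Together these show the loop fails for every $k<\chi(G)$, accepts exactly at $k=\chi(G)$ when $G$ is an NG-graph, and otherwise runs out at $k=n+1$ and correctly rejects.

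The hard part will be the hypothesis $A\neq\emptyset$, which must be read into step~(i) exactly as it appears in Theorem~\ref{NG-charac}(i); without it soundness fails. For example, the ``net'' graph obtained from a triangle by attaching one pendant vertex to each of its three vertices has $n=6$ and, at $k=3$, an empty $A$, a complete $B=K_3$, and an independent $C=I_3$, so that (ii)--(v) hold vacuously and the clique form of (vi) reads $|A|+|B|=3=k$; yet a direct computation gives $\chi(G)+\chi(\gbar)=3+3=6=n$, so this graph is \emph{not} an NG-graph. I would therefore state explicitly that the algorithm treats an empty $A$ as a failure of (i)—matching the converse ($\Longrightarrow$) direction of Theorem~\ref{NG-charac}, where $A=\emptyset$ was shown to contradict the NG-graph property—and verify that this single convention closes the one gap in soundness. (The remaining matching of the three forms in (vi) is unambiguous: for $|A|\ge 2$ the clique and independent cases are distinguished by the presence of edges, and for $|A|=1$ the two formulas agree.)

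Finally, for the running-time bound the loop executes at most $n$ times. Within each pass the vertex degrees and the partition $A,B,C$ are computed in $O(n^2)$ time; each of (i)--(v) is a test of completeness, independence, a fixed five-vertex pattern, or the presence or absence of all cross edges between two vertex sets, each decidable in $O(n^2)$ time by scanning the adjacency relation; and (vi) is $O(1)$. Thus each iteration costs $O(n^2)$ and the whole algorithm runs in $O(n^3)$ time, which is polynomial.
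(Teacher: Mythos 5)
Your proof is correct and follows the same overall strategy as the paper's: iterate over candidate values $k$ for $\chi(G)$, form the degree partition with threshold $k-1$, and invoke the characterization of Theorem~\ref{NG-equal-thm}. You do, however, supply two pieces of rigor that the paper's very terse proof omits. First, you prove soundness explicitly, observing that the ($\Longleftarrow$) direction of Theorem~\ref{NG-equal-thm} uses only the adjacency structure recorded in (i)--(v) and never the fact that the partition is the canonical one of Definition~\ref{ABC-def}; this is precisely what licenses accepting at a value of $k$ that was merely guessed, and it is also what guarantees that at most one $k$ can pass, namely $k=\chi(G)$. Second, and more substantively, you correctly insist that the algorithm's question (i) be read as including the requirement $A\neq\emptyset$ from Theorem~\ref{NG-equal-thm}(i): your ``net'' example (a triangle with a pendant attached to each vertex, which at $k=3$ yields $A=\emptyset$, $B=K_3$, $C=I_3$, satisfies (ii)--(vi), yet has $\chi(G)+\chi(\gbar)=6=n$) shows that soundness genuinely fails if an empty $A$ is allowed to count as a clique. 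The paper's proof is silent on this point, so your version closes a real gap in the algorithm as literally stated. The complexity analysis is identical to the paper's.
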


\begin{proof}
We first establish correctness.   We know $\chi(G)$ is between 1 and $n$, so start with $k = 1$, thinking of $k$ as a potential value of $\chi$.  
  
If the answers to (i) -- (vi) are all `yes', then Theorem~\ref{NG-equal-thm} ensures that  $G$ is a  an NG-graph, where (vi) verifies that $\chi(G) = k$.     
  If any of the answers to (i) -- (vi) are no, we try the next possible value of $k$.  If we reach $k=n+1$, then we have tried all possible values of $\chi(G) $, and $V(G)$  can not be partitioned so that $G$ has the necessary form.  Thus the algorithm correctly determines whether  $G$ is an NG-graph, and if so, computes the chromatic number.
  
  Each of the questions can be answered in time $O(n^2)$, and  we potentially have to increment $k$ from 1 to $n$ so the running time of the algorithm is $O(n^3)$.
\end{proof}

 \section{Characterizing those NG-graphs that are NGD-graphs}
 
In Section 3, we characterized NG-graphs and according to Theorem~\ref{NG-charac}, there are three possibilities for $A_G$.  We name them for convenience in the next definition.

\begin{Def} An NG-graph $G$ is of Type 1 if $G[A_G]$ is a clique, Type 2 if $G[A_G]$ is an independent set and Type 3 if $G[A_G]$ is a 5-cycle. 
\end{Def}

Note that an NG-graph with $|A_G|=1$ is both Type 1 and Type 2.
 
Analogously we would like to characterize NGD-graphs.  The set of NG-graphs and the set of NGD-graphs intersect, but neither is contained in the other, as demonstrated in Table \ref{small-table}.

\begin{table}[htbp] 

\begin{tabular}{|c|c|c|} \hline
$G$ & NG-graph & NGD-graph \\ \hline
$K_{3,1,1}$ & yes& yes\\ \hline 
$K_{3,2} $ & no & yes \\ \hline
$C_5$ & yes & no \\ \hline
$C_7$ & no & no \\\hline
\end{tabular} 
\bigskip

\caption{Examples of graphs that separate the classes of NG-graphs and NGD-graphs}
\label{small-table}
\end{table}
In this section we make progress toward this goal by characterizing the NG-graphs that are also NGD-graphs.  We show in Theorem~\ref{type3-NG}
that none of the Type 3 NG-graphs are NGD-graphs and in Theorem~\ref{type1} and Corollary \ref{type2}, we characterize those  Type 1 and Type 2 NG-graphs that are NGD-graphs.   The next result shows that the complement of a Type 1 NG-graph is a Type 2 NG-graph. 
 
\begin{Prop} \label{comps} If $G$ is an NG-graph, then $\gbar$ is an NG-graph.  Moreover, an NG-graph $G$ is of Type 1 iff \ $\gbar$ is of Type 2, and $G$ is of Type 3 iff \ $\gbar$ is of Type 3.  
\end{Prop}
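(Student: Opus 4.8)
The plan is to take the first assertion straight from the symmetry of Definition~\ref{NG-def}, and then to reduce both ``Type'' equivalences to a single structural fact about the set $A$ together with one elementary observation about $C_5$. To begin, $\gbar$ is an NG-graph essentially for free: since $\overline{\gbar}=G$, the defining equation $\chi(G)+\chi(\gbar)=n+1$ is symmetric in $G$ and $\gbar$, so rewriting it as $\chi(\gbar)+\chi(\overline{\gbar})=n+1$ already exhibits $\gbar$ as an NG-graph. (This is exactly the remark invoked at the start of the proof of Lemma~\ref{C-lem}.)

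The heart of the matter is that, among the three blocks of the ABC-partition, only $B$ and $C$ move under complementation while $A$ is fixed. Writing $A'=A_{\gbar}$, $B'=B_{\gbar}$, $C'=C_{\gbar}$, I would reuse the identity from the proof of Lemma~\ref{C-lem}: combining $deg_G(v)+deg_{\gbar}(v)=n-1$ with $\chi(G)+\chi(\gbar)=n+1$ gives $(deg_G(v)-\chi(G)+1)+(deg_{\gbar}(v)-\chi(\gbar)+1)=0$ for every vertex $v$, whence $A'=A_G$ (and $B'$, $C'$ swap). The payoff is $A_{\gbar}=A_G$, so the subgraph that determines the type of $\gbar$ is $\gbar[A_{\gbar}]=\gbar[A_G]=\overline{G[A_G]}$, i.e. the complement, taken inside $A_G$, of the very subgraph that determines the type of $G$.

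From here the equivalences are immediate once I record how the three permitted structures behave under complementation. The complement of a clique is an independent set and conversely, so $G[A_G]$ is a clique iff $\overline{G[A_G]}=\gbar[A_G]$ is an independent set; this is precisely the statement that $G$ is Type~1 iff $\gbar$ is Type~2 (and, applied to $\gbar$, the symmetric direction as well). For Type~3 the decisive point is that $C_5$ is self-complementary, $\overline{C_5}\cong C_5$, so $G[A_G]$ is a $5$-cycle iff $\gbar[A_G]$ is a $5$-cycle, giving $G$ is Type~3 iff $\gbar$ is Type~3. Since Theorem~\ref{NG-equal-thm} guarantees $A_G\neq\emptyset$ and that $G[A_G]$ is one of these three graphs, all cases are covered.

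I do not expect a genuine obstacle: once $A_{\gbar}=A_G$ is in hand (and it is already latent in Lemma~\ref{C-lem}), the proposition collapses to the two facts ``the complement of a clique is an independent set'' and ``$C_5$ is self-complementary.'' The only thing demanding care is bookkeeping, namely keeping straight that the type is read off the induced subgraph on $A$ and that complementing $G$ restricts to complementing that induced subgraph, so that no spurious interaction with the blocks $B$ and $C$ creeps in.
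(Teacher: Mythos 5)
Your proposal is correct and follows essentially the same route as the paper: the first claim from the symmetry of Definition~\ref{NG-def}, then the degree computation showing $A_{\gbar}=A_G$, and finally the observations that complementing a clique gives an independent set and that $C_5$ is self-complementary. The paper performs the degree calculation directly rather than citing Lemma~\ref{C-lem}, but the content is identical.
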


\begin{proof}  The first sentence follows immediately from Definition \ref{NG-def}, so we focus on the statements in the second sentence. 
Since $G$ is an NG-graph,  $\chi(G)+\chi(\gbar)=n+1$ and $A_G$ is  the set of vertices whose degree is $\chi(G)-1$.  Therefore $A_G$ is the set of vertices whose degree in $\gbar$ is $(n-1)-(\chi(G)-1)= n-\chi(G) =\chi(\gbar)-1.$  Hence $A_G=A_{\gbar}$, and $G[A_G]$ is a clique if and only if $G[A_{\gbar}]$ is an independent set.  So $G$ is a Type 1 NG-graph if and only if $\gbar$ is a Type 2 NG-graph.  
The complement of a 5-cycle is a 5-cycle, so $G$ is a Type 3 NG-graph if and only if $\gbar$ is a Type 3 NG-graph.  
\end{proof}

In proving that a Type 3 NG-graph $G$ is not an NGD-graph, it will be helpful to have an optimal coloring of $G$ in which one color appears on only one vertex.  This is possible by our next lemma.  

 \begin{lemma} 
 \label{AB-lem} (i) If $G$ is a Type 1 NG-graph and $x \in A_G \cup B_G$ then there exists a proper coloring of $G$ using $\chi(G)$ colors in which   vertex $x$ is uniquely colored. (ii) If $G$ is a Type 2 NG-graph and $x \in B_G$ then  there exists a proper coloring of $G$ using $\chi(G)$ colors in which   vertex $x$ is uniquely colored.
 
 \end{lemma}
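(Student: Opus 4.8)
The plan is to exhibit explicit optimal colorings in each case, using the structure of Type 1 and Type 2 NG-graphs supplied by Theorem~\ref{NG-charac} together with the color counts computed in the forward direction of its proof. Throughout I write $A=A_G$, $B=B_G$, $C=C_G$ and recall that $A\cup B$ induces a clique, $C$ is an independent set, there are no $A$--$C$ edges, and hence every vertex of $C$ has all of its neighbors in $B$.

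I would start with part (ii), since it is immediate. For a Type 2 graph we have $G[A]=I_a$, $G[B]=K_b$, $G[C]=I_c$, and $\chi(G)=b+1$; moreover $A\cup C$ is an independent set, being the union of two independent sets with no edges between them. Color the clique $B$ with the $b$ distinct colors $1,\dots,b$ and give every vertex of $A\cup C$ the single color $b+1$. This coloring is proper, uses $\chi(G)=b+1$ colors, and since $B$ is a clique each color in $\{1,\dots,b\}$ occurs exactly once; hence any prescribed $x\in B$ is uniquely colored.

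For part (i), a Type 1 graph has $G[A]=K_a$, $G[B]=K_b$, $G[C]=I_c$, $\chi(G)=a+b$, and $A\cup B=K_{a+b}$. I would begin by coloring the clique $A\cup B$ with the $a+b$ distinct colors $1,\dots,a+b$; it then remains only to color $C$ while keeping the color of the prescribed vertex $x$ off of every other vertex. If $x\in B$, color all of $C$ with colors taken from $A$ (for instance a single fixed color of $A$); this is proper because no vertex of $C$ has a neighbor in $A$, and it never reuses the color of $x$, so $x$ is uniquely colored. If $x\in A$ and $a\ge 2$, color all of $C$ with one fixed color of $A\setminus\{x\}$, which is again proper and leaves the color of $x$ on $x$ alone. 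The one delicate case is $x\in A$ with $a=1$, where the color of $x$ is the only color appearing on $A$ and so cannot simply be protected by recoloring $C$ from $A$. Here I would use the degree bound defining $C$: every $w\in C$ satisfies $\deg(w)<\chi(G)-1=a+b-1=b$, and since all neighbors of $w$ lie in $B=K_b$, at least one of the $b$ colors on $B$ is missing from $N(w)$; coloring each such $w$ with a free color of $B$ yields a proper coloring in which the unique color on $A$ appears only on $x$.

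I expect the $a=1$ subcase of part (i) to be the only genuine obstacle, and it is resolved precisely by the defining degree inequality for $C$ rather than by the ``reuse a color from $A$'' device that handles the other cases; the remaining cases reduce to observing that the standard optimal colorings already isolate the color of the target vertex.
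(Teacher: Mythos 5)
Your proof is correct and follows essentially the same route as the paper's: the same three-way case split ($x\in B$; $x\in A$ with $|A|\ge 2$; $x\in A$ with $|A|=1$), with the first two cases handled by reusing a color of $A$ on all of $C$ and the last by the degree bound $\deg(w)<\chi(G)-1=|B|$ forcing a free color of $B$ for each $w\in C$. The only cosmetic difference is that you build the optimal coloring explicitly while the paper starts from an arbitrary $\chi(G)$-coloring and recolors $C$.
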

 \begin{proof}  We start by proving (i).  Let $G$ be  a Type 1 NG-graph and fix a proper coloring of $G$ using $\chi(G)$ colors.
 Since the vertices in  $A_G \cup B_G$ induce a clique in $G$, they    are all colored distinctly.  If there are  other vertices with $x$'s color, they must be in $C_G$.    First consider the case in which $x \in B_G$.  Since $A_G \neq \emptyset$, there exists $y \in A_G$, and all vertices of $C_G$  may be recolored to have $y$'s color.  This leaves $x$ as the only vertex in its color class.
 
 Next consider the case in which $x \in A_G$ and  $|A_G| = 1$.   By the definition of Type 1 NG-graphs, we know $N(x) = B_G$ and for each $c\in C_G$ we know $deg(c) \neq deg(x) = |B_G|$.  Thus each $c \in C_G$ has a non-neighbor in $B_G$.  We can recolor each $c \in C_G$ to be the color of any of its non-neighbors in $B_G$.  This leaves $x$ as the only vertex in its color class.
 
 Finally,  consider the case in which $x \in A_G$ and  $|A_G| > 1$.  Let $a \in A_G$ where $a \neq x$.  Since $ax \in E(G)$ we know $a$'s color is different from $x$'s color.  Then each vertex in $C_G$ can be colored with $a$'s color and this leaves $x$ as the only vertex in its color class.
 
 The proof of (ii) is similar to the first paragraph of the proof of (i).

 \end{proof}
 
 \begin{Thm}
 \label{type3-NG}
Type 3 NG-graphs are not NGD-graphs.
 \end{Thm}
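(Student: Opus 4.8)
The plan is to prove the stronger bound $\chi_D(G)+\chi_D(\gbar)\le n+D(G)-2$; since $A_G$ induces a $5$-cycle we will have $D(G)\ge 3$, so this immediately gives the theorem. Write $A=A_G$, $B=B_G$, $C=C_G$ and $b=|B|$, $c=|C|$, so that $G[A]=C_5$, $G[B]$ is a clique, $G[C]$ is independent, and, as computed in the forward direction of Theorem~\ref{NG-equal-thm}, $\chi(G)=b+3$, $\chi(\gbar)=c+3$, and $n=b+c+5$. By Proposition~\ref{comps}, $\gbar$ is again a Type 3 NG-graph, so it will suffice to analyze $G$ and invoke symmetry. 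First I would record the symmetry structure. Since every vertex of $A$ is joined to all of $B$ and to none of $C$, and since $A,B,C$ are distinguished by degree, every automorphism of $G$ preserves each of $A$, $B$, $C$ and acts independently on $A$ and on $B\cup C$; hence $\mathrm{Aut}(G)\cong \mathrm{Aut}(C_5)\times\Gamma$, where $\Gamma$ is the restriction of $\mathrm{Aut}(G)$ to $H:=G[B\cup C]$. As the two factors act on disjoint vertex sets, a coloring distinguishes $G$ iff it distinguishes $C_5$ on $A$ and $\Gamma$ on $B\cup C$; using $D(C_5)=3$ this yields $D(G)=\max\!\bigl(3,\,D^{\Gamma}(H)\bigr)$.

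The crux is that the $5$-cycle admits a proper $3$-coloring that is already distinguishing (for instance the colors $1,2,1,2,3$ placed around the cycle break all of $\mathrm{Aut}(C_5)$) -- this is precisely the efficiency that the generic recoloring of Theorem~\ref{NG-new} discards. Exploiting it, I would prove the key inequality
\[ \chi_D(G)\ \le\ \max\bigl(\chi_D^{\Gamma}(H),\ b+3\bigr). \]
Fix an optimal proper $\Gamma$-distinguishing coloring of $H$ with $p:=\chi_D^{\Gamma}(H)$ colors; the clique $B$ receives $b$ distinct colors. Because $B$ is complete to $A$ while $C$ meets no edge incident to $A$, I can color $A$ with the pattern $1,2,1,2,3$ using three colors chosen outside the $b$ colors on $B$ (reusing three of the colors already appearing on $C$ when $p\ge b+3$, and otherwise introducing new colors, for a total of $\max(p,b+3)$). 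This coloring is proper, and it is distinguishing since any color-preserving automorphism $(\sigma,\tau)\in\mathrm{Aut}(C_5)\times\Gamma$ must fix the $A$-pattern (forcing $\sigma=\mathrm{id}$) and the coloring of $H$ (forcing $\tau=\mathrm{id}$). Applying the same argument to the Type 3 graph $\gbar$, whose $B/C$ roles are interchanged and whose associated subgraph is $\overline{H}=\gbar[B\cup C]$ with the same group $\Gamma$, gives $\chi_D(\gbar)\le\max\bigl(\chi_D^{\Gamma}(\overline H),\,c+3\bigr)$.

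Finally I would combine these through the group-action Nordhaus--Gaddum bound. Since $H$ and $\overline H$ are complementary graphs sharing the group $\Gamma$, Proposition~\ref{prop-cD} gives $\chi_D^{\Gamma}(H)+\chi_D^{\Gamma}(\overline H)\le (b+c)+D^{\Gamma}(H)$, while properness forces the clique lower bounds $\chi_D^{\Gamma}(H)\ge b$ and $\chi_D^{\Gamma}(\overline H)\ge c$ ($B$ is a clique in $H$, $C$ is a clique in $\overline H$). A short case analysis on whether each of $\chi_D^{\Gamma}(H),\chi_D^{\Gamma}(\overline H)$ exceeds $b+3$, $c+3$ then bounds the sum of the two maxima: it is $n+1$ when both maxima equal the "$+3$" terms, at most $n+D^{\Gamma}(H)-5$ when both equal the distinguishing terms, and at most $n+D^{\Gamma}(H)-2$ in the two mixed cases (here one eliminates $\chi_D^{\Gamma}(H)$ using Proposition~\ref{prop-cD} together with $\chi_D^{\Gamma}(\overline H)\ge c$). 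Since $D^{\Gamma}(H)\le D(G)$ and $D(G)\ge 3$, each case gives at most $n+D(G)-2$, so $\chi_D(G)+\chi_D(\gbar)\le n+D(G)-2<n+D(G)$ and $G$ is not an NGD-graph.

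The main obstacle is resisting the black-box use of Theorem~\ref{NG-new} or Corollary~\ref{H-F-cor}: for a pure $5$-cycle every distinguishing coloring with $D(G)$ colors has all color classes inducing NG-graphs, so those tools deliver only the non-strict bound and cannot detect the slack. The genuine work lies in the middle paragraph -- establishing the product decomposition $\mathrm{Aut}(G)\cong\mathrm{Aut}(C_5)\times\Gamma$ and, above all, verifying in the proof of $\chi_D(G)\le\max(\chi_D^{\Gamma}(H),b+3)$ that the three colors spent on the $5$-cycle can be amortized against colors already present on $B\cup C$. It is this amortization, made symmetric through the complementary pair $(H,\overline H)$, that lets the two halves jointly gain the factor of $2$ over the generic estimate.
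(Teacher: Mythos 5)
Your proof is correct, and it takes a genuinely different route from the paper's. The paper argues by contradiction from a fixed $D(G)$-distinguishing coloring: it recolors the $5$-cycle $G[A_G]$ with three colors (classes $\{v_1,v_3\}$, $\{v_2,v_4\}$, $\{v_5\}$), invokes Corollary~\ref{H-F-cor} to see that each color class induces an NG-graph of Type 1 or 2, uses Lemma~\ref{AB-lem} to arrange that $v_1$ and $v_2$ are uniquely colored in the palettes of $\overline{G_1}$ and $\overline{G_2}$, and then merges those two singleton classes in $\gbar$ (legal since $v_1v_2\notin E(\gbar)$) to save exactly one color, yielding $\chi_D(G)+\chi_D(\gbar)\le n+D(G)-1$. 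You instead bypass Theorem~\ref{NG-new}, Corollary~\ref{H-F-cor} and Lemma~\ref{AB-lem} entirely: you establish the product decomposition $Aut(G)\cong Aut(C_5)\oplus\Gamma$ (the Type 3 analogue of Proposition~\ref{type1-gamma}), exploit $\chi_D(C_5)=D(C_5)=3$ to amortize the three colors on $A_G$ against colors already present on $B_G\cup C_G$, and close with Proposition~\ref{prop-cD} applied to the complementary pair $G[B\cup C]$, $\gbar[B\cup C]$ plus the clique lower bounds --- exactly the strategy the paper deploys later for Type 1 graphs in Lemma~\ref{xy} and Theorem~\ref{type1}. What your route buys is the strictly sharper estimate $\chi_D(G)+\chi_D(\gbar)\le n+D(G)-2$, which is tight (e.g.\ for $G=C_5$ itself, where both sides equal $6$), at the cost of a short case analysis; the paper's route is shorter given the machinery it has already built but only detects a slack of $1$. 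I checked the delicate points of your argument --- that the three colors on $A_G$ can be chosen disjoint from the $b$ colors on the clique $B_G$ while reusing colors from $C_G$, that the restriction $\Gamma$ is a subgroup of $Aut(G[B\cup C])$ so Proposition~\ref{prop-cD} applies, and that $D^{\Gamma}(G[B\cup C])\le D(G)$ --- and all hold.
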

 \begin{proof}
 Let $G$ be a Type 3 NG-graph, and for a contradiction, assume $G$ is also  an NGD-graph.  
 Fix a distinguishing coloring of $G$ using colors $1,2,3, \ldots ,D(G)$. By definition of a Type 3 NG-graph, the vertices in $A_G$ induce a 5-cycle in $G$, which we represent by $A_G = \{v_1,v_2,v_3,v_4,v_5\}$ with adjacencies $v_1v_2, v_2v_3, v_3v_4, v_4v_5, v_5v_1$. 
 
  By the structure of Type 3 NGD-graphs, we know that any particular 
$v \in B_G \cup C_G$  has the same relationship to 
each vertex in $A_G$.  Furthermore, any automorphism of $G$ preserves the sets $A_G, B_G, C_G$ because these sets are defined in terms of vertex degrees.  Thus a     coloring of $G$ is distinguishing if and only if it is distinguishing on both  $G[A_G]$ and $G[B_G \cup C_G]$.  Since $D(C_5) = 3$, we may recolor the vertices in $A_G$ so that they use three colors and without loss of generality, vertices $v_1,v_3$ get color 1, vertices $v_2,v_4$ get color 2, and vertex $v_5$ gets color 3.  Let    $V_i$ be the vertices of color $i$ and let $G_i = G[V_i]$ be the graph induced by the vertices of color $i$. 

For simplicity of notation in what follows, let $H = G_1$ and $F = G_2$.  
By Corollary~\ref{H-F-cor}, graphs $H$ and $F$ are NG-graphs.  Furthermore, $H$ (and likewise $F$) is not of Type 3, since there is no $C_5$ induced in $H$ (or in $F$).    Thus $H$ and $F$ are Type 1 or Type 2 NG-graphs and by Proposition \ref{comps}, so are their complements $\Hbar$ and $\Fbar$.     Since $v_1$ and $v_3$ have the same degree in $\Hbar$, they are in the same part of the $ A_{\Hbar}   \cup  B_{\Hbar} \cup  C_{\Hbar}$ partition  of $V(\Hbar)$.  
 If $\Hbar$ is Type 1,  since  $v_1$ is adjacent to $v_3$ in $\Hbar$, we know $v_1, v_3  \in  A_{\Hbar}   \cup  B_{\Hbar}$.  If $\Hbar$ is Type 2,  since  $v_1$ is adjacent to $v_3$ in $\Hbar$, we know $v_1, v_3  \in  B_{\Hbar}$.    Now applying Lemma~\ref{AB-lem} to $\overline{H}$ (and $\overline{F}$), we conclude that  there exists a proper coloring of $\Hbar$  using $\chi(\Hbar)$ colors in which $v_1$ is uniquely colored.    Similarly, there exists a proper coloring of $\Fbar$  using $\chi(\Fbar)$ colors in which $v_2$ is uniquely colored.
 
Following the proof of Theorem \ref{NG-new},  for each $i = 1,2,3,  \ldots,  D(G)$, 
 we create a  new coloring of   $G_i$  using $\chi(G_i)$ colors and   of $\overline{G_i}$  using $\chi(\gbar_i)$ colors,   so that   $\chi(G_i) + \chi(\overline{G_i}) = |V_i| + 1$.  Note that in this coloring we use a different palette of colors for each $G_i$ and for each $\overline{G_i}$.    Furthermore, we choose colorings of $\Hbar = \overline{G_1}$ and $\Fbar = \overline{G_2}$ so that $v_1$ is uniquely colored (yellow) in $\Hbar$ and $v_2$  is  uniquely colored (purple)  in $\Fbar$.    Finally, we switch $v_2$'s color to yellow.  Since $v_1$ and $v_2$ are not adjacent in $\gbar$,   the new coloring is proper.  It is also distinguishing as follows:
 
Recall that in $G $ or $\gbar$, any automorphism preserves the set of vertices $\{v_1, v_2, v_3, v_4,$ $ v_5\}$.  Note that $v_1,v_2,v_3,v_4,v_5$ were all given different colors in $\gbar$ before this final switch of $v_2$'s color, since they come from three different palettes, and $v_1, v_2$ were uniquely colored.  After the switching $v_2$'s color, each of $v_3, v_4, v_5$ is fixed by every automorphism that preserves the colors.  There is no automorphism that switches $v_1$ and $v_2$ and preserves the colors after the final switch since $v_1$ is adjacent to $v_3$ and $v_2$ is not adjacent to $v_3$ in $\gbar$.    There are no vertices outside of $A_{\gbar}$ that needed $v_1$ and $v_2$ to distinguish them, since all vertices outside of $A$ have the same relationship to each vertex inside of $A$.  
 
Now we have given   colorings of $G$ and $\gbar$  that are distinguishing and proper using a total of $n + D(G) - 1$ colors, contradicting $G$ being an NG-graph.
 \end{proof}

We will need a refinement on the vertex partition of a Type 1 NG-graph, where we further partition the set $C_G$ as $L_{G}\cup M_{G}$.  
\begin{Def} \rm \label{ABLM}  Let $G$ be a Type 1 NG-graph with $ABC$-partition $A_G\cup B_G\cup C_G$.  We define the $ABLM$-partition of $G$ to be $V(G)=A_G\cup B_G\cup L_{G}\cup M_{G}$ where $L_{G}=\{v\in C_G:\mbox{$v$ is adjacent to every vertex in $B_G$}\}$. When it is unambiguous we write $A=A_G, B=B_G, L=L_{G}$ and $M=M_{G}$. 
\end{Def}
From  Definition \ref{ABLM}, we know $deg(v)=|B_G|$ for each $v\in L_G$, and $deg(v)\leq |B_G|-1$ for each $v\in M_G$.  If $|A_G|=1$, then $L_G=\emptyset$ because any $v\in L_G$ would have the same degree as the vertex in $A_G$, contradicting Definition \ref{ABC-def}. 

Let $\oplus$ be the external direct product of groups, and recall that $S_n$ is the group of permutations of the set $\{1,2,3, \ldots, n\}$. 

 \begin{Prop} \label{type1-gamma} Let $G$ be a Type 1 NG-graph and 
 define $A, B, L, M$ as in Definition \ref{ABLM}.  Then 
 \[Aut(G) \cong S_{|A|} \oplus S_{|L|}\oplus \Gamma\] 
 where $\Gamma$ is the subgroup of automorphisms that act on $G[B\cup M]$ and fix $A$ and $L$.
\end{Prop}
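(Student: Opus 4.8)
The plan is to exhibit an explicit isomorphism $\Psi \colon S_{|A|} \oplus S_{|L|} \oplus \Gamma \to Aut(G)$ obtained by gluing together independent actions on the three blocks $A$, $L$, and $B\cup M$ of the vertex partition. The whole argument rests on two features of a Type~1 NG-graph recorded in Theorem~\ref{NG-charac} and Definition~\ref{ABLM}: first, that every automorphism of $G$ must preserve each of $A$, $L$, $B$, and $M$ setwise; and second, that the adjacency between the block $A$ (and likewise $L$) and the rest of the graph is completely uniform --- $A$ is joined to all of $B$ and to none of $L\cup M$, while $L$ is joined to all of $B$ and to none of $A\cup M$.

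First I would verify the invariance of the blocks by a degree count. Writing $a=|A|$ and $b=|B|$, the structure in Theorem~\ref{NG-charac} gives $deg(v)=a+b-1=\chi(G)-1$ for $v\in A$, $deg(v)>\chi(G)-1$ for $v\in B$, $deg(v)=b$ for $v\in L$, and $deg(v)\le b-1$ for $v\in M$. Since a vertex of $L$ lies in $C_G$, we have $b<\chi(G)-1$, which forces $a\ge 2$ whenever $L\neq\emptyset$; consequently the four degree values fall into the strictly separated ranges $\{\,<b\,\}<\{b\}<\{a+b-1\}<\{\,>a+b-1\,\}$. As automorphisms preserve degree, they preserve each of $A$, $B$, $L$, and $M$ setwise. (When $a=1$ we have $L=\emptyset$, $S_{|L|}$ is the trivial group, and the statement degenerates consistently.)

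Next I would identify the three subgroups and assemble the map. Any two vertices of $A$ are adjacent and have identical neighbors outside themselves, so transposing them and fixing everything else is an automorphism; any two vertices of $L$ are non-adjacent with common neighborhood exactly $B$, so the same holds. This embeds $S_{|A|}$ and $S_{|L|}$ into $Aut(G)$, and $\Gamma$ is a subgroup by definition. I then let $\Psi(\alpha,\lambda,\gamma)$ be the bijection of $V(G)$ acting as $\alpha$ on $A$, as $\lambda$ on $L$, and as $\gamma$ on $B\cup M$; equivalently $\Psi(\alpha,\lambda,\gamma)=\widetilde{\alpha}\,\widetilde{\lambda}\,\gamma$, where $\widetilde{\alpha},\widetilde{\lambda}$ extend $\alpha,\lambda$ by the identity. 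That this is an automorphism follows by checking edges block by block: within-block edges are handled by $\alpha$, $\lambda$, and $\gamma$, and cross-block edges are preserved precisely by the uniform adjacency above together with the fact that $\gamma\in Aut(G)$ preserves $B$ and $M$. Because the three factors have disjoint supports, $\Psi$ is a homomorphism and is injective. For surjectivity, given $\sigma\in Aut(G)$ I set $\alpha=\sigma|_A$, $\lambda=\sigma|_L$, and $\gamma=\widetilde{\lambda}^{\,-1}\widetilde{\alpha}^{\,-1}\sigma$; block invariance from the first step guarantees that $\gamma$ fixes $A$ and $L$ pointwise and preserves $B\cup M$, so $\gamma\in\Gamma$ and $\sigma=\Psi(\alpha,\lambda,\gamma)$.

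I expect the main obstacle to be the bookkeeping in the invariance step --- in particular, confirming that $L$ and $M$ are genuinely separated from each other and from $A$ by degree, and handling the degenerate case $a=1$ --- rather than the gluing, which is essentially forced once uniform adjacency is in hand. A secondary point to state carefully is that membership of the residual map $\gamma$ in $\Gamma$ uses exactly that every automorphism of $G$ respects the degree-defined blocks $B$ and $M$.
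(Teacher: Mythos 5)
Your proposal is correct and follows essentially the same route as the paper: block invariance from the degree separation of $A$, $B$, $L$, $M$, together with the uniform adjacency of $A$ and of $L$ to the rest of the graph, yields the direct-sum decomposition, with $Aut(G[A])=S_{|A|}$ and $Aut(G[L])=S_{|L|}$ because $G[A]$ is a clique and $G[L]$ is independent. You simply make explicit --- the degree bookkeeping, the degenerate case $a=1$, and the gluing isomorphism $\Psi$ with its homomorphism, injectivity, and surjectivity checks --- what the paper asserts in a few lines.
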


\begin{proof} 
We note that any automorphism of $G$ preserves the sets $A,B,L,M$, because of the different vertex degrees in each set.   Thus, in a distinguishing coloring, we may use the same set of colors for each set of vertices.  Further, since each vertex in $A$ has the same set of neighbors outside of $A$, 
and each vertex in $L$ has the same neighborhood, then the action of $Aut(G)$ is independent on the three subgraphs, $G[A], G[L], G[B\cup M]$.  Then 
the automorphism group of $G$ is isomorphic to $Aut(G[A])\oplus Aut(G[L])\oplus \Gamma$ where $\Gamma\subseteq Aut(G)$ is the subgroup of automorphisms that act on $G[B\cup M]$ and fix $A$ and $L$.  Since $A$ is complete and $L$ is an independent set, $Aut(A)=S_{|A|}$ and $Aut(L)=S_{|L|}$.  Hence 
$ Aut(G)\cong S_{|A|}\oplus S_{|L|}\oplus \Gamma.$
\end{proof}

\begin{Cor} \label{group} Let $G$ be a Type 1 NG-graph and define $A, B, L,M $ as in Definition \ref{ABLM}.   Let $a=|A|$ and $\ell=|L|$. Then 
\[D(G)=max\{a,\ell, D^{\Gamma}(G[B\cup M])\}\] where $\Gamma$ is the subgroup of automorphisms that act on $G[B\cup M]$ and fix $A$ and $L$.
\end{Cor}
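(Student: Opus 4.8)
The plan is to read off the result directly from the direct-product decomposition of $Aut(G)$ established in Proposition~\ref{type1-gamma}. That proposition gives $Aut(G) \cong S_{|A|} \oplus S_{|L|} \oplus \Gamma$, where the three factors act on the pairwise disjoint vertex sets $A$, $L$, and $B \cup M$ respectively, and moreover every automorphism preserves each of these sets because they are distinguished by vertex degree. The central observation I would use is that, for a group acting as a direct product on disjoint blocks of vertices, a coloring of the whole vertex set is distinguishing if and only if its restriction to each block is distinguishing for the corresponding factor. Indeed, a non-identity element $(\sigma, \tau, \gamma)$ of the product preserves a coloring exactly when each of $\sigma$, $\tau$, $\gamma$ preserves the induced coloring on its own block; so the product action is broken precisely when each factor's action is broken on its block.

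Granting this, the minimum number of colors needed to distinguish $G$ is determined block by block. Because the three blocks are disjoint, I can use a single shared palette of $r$ colors and color the blocks independently; such a coloring is distinguishing precisely when $r$ is at least the distinguishing number of each block under its factor. Hence $D(G)$ equals the maximum of the three block distinguishing numbers, namely $D^{S_{|A|}}(G[A])$, $D^{S_{|L|}}(G[L])$, and $D^{\Gamma}(G[B \cup M])$. It remains to evaluate the first two. Since $A$ induces a clique and $L$ induces an independent set, the full symmetric group $S_{|A|}$ (respectively $S_{|L|}$) acts on these vertices, and breaking every permutation forces all vertices in the block to receive distinct colors; thus the distinguishing number of $A$ is exactly $a = |A|$ and that of $L$ is exactly $\ell = |L|$. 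Substituting gives $D(G) = \max\{a, \ell, D^{\Gamma}(G[B \cup M])\}$.

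The two symmetric-group computations are immediate, so the only point demanding care is the ``if and only if'' for the product action. One must check both that a coloring distinguishing on each block is globally distinguishing, because any color-preserving automorphism restricted to a block is a color-preserving automorphism of that block and hence trivial there, and conversely that a globally distinguishing coloring restricts to a distinguishing coloring on each block, using that each factor embeds into $Aut(G)$ as the automorphisms fixing the other two blocks pointwise. This bookkeeping with the direct-product structure is where I expect the only genuine work to lie, but it is entirely forced by Proposition~\ref{type1-gamma}.
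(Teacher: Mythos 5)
Your proposal is correct and follows essentially the same route as the paper: both rest on the direct-product decomposition of $Aut(G)$ from Proposition~\ref{type1-gamma}, the observation that a single palette can be shared across the disjoint blocks $A$, $L$, and $B\cup M$ so that $D(G)$ is the maximum of the blockwise distinguishing numbers, and the computation that breaking $S_{|A|}$ and $S_{|L|}$ requires exactly $a$ and $\ell$ colors respectively. You merely spell out the ``distinguishing globally iff distinguishing on each block'' equivalence more explicitly than the paper's one-line argument does, which is a welcome bit of extra care rather than a deviation.
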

 
\begin{proof} Recall the definition of the distinguishing number with respect to $\Gamma$ from Definition \ref{DG}.  In any distinguishing coloring of $G$, the colors can be reused for each set in the $ABLM$-partition of $G$, and the number of colors needed for $A$ is $a$ and for $L$ is $\ell$, thus
$D(G)=max\{a,\ell, D^{\Gamma}(G[B\cup M])\}$.
\end{proof}

We now define some necessary parameters, $x_G$ and $y_G$. 
\begin{Def} \rm \label{xandy} Let $G$ be a Type 1 NG-graph and define $A, B, L, M$ as in Definition \ref{ABLM}. Let $b=|B|$,  $m=|M|$ and let $\Gamma$ be defined as in Proposition \ref{type1-gamma}.  We define $x=x_G$ to be the minimum number of colors, above the $b$ colors used on the vertices in $B$, needed to color the vertices in $M$, to get a distinguishing and proper coloring of $G[B\cup M]$ under the action of $\Gamma$.  
Similarly, we define  $y=y_G$ to be the minimum number of colors, above the $m$ colors used on the vertices in $M$, needed to color the vertices in $B$ to get a distinguishing and proper coloring of $\gbar[B\cup M]$ under the action of $\Gamma$. 
\end{Def}  The next lemma gives a bound on $x+y$, and following it are more two technical lemmas. 

\begin{lemma}  \label{xy} Let $G$ be a Type 1 NG-graph and define $A, B, L, M$ as in Definition \ref{ABLM}. Then \[x, y\leq x+y\leq D(G).\]
\end{lemma}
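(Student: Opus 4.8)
The first two inequalities are immediate: since $x$ and $y$ are (nonnegative) counts of colors, we have $x,y \le x+y$. The content of the lemma is the bound $x+y \le D(G)$. My plan is to recognize the quantities $b+x$ and $m+y$ as $\Gamma$-distinguishing chromatic numbers and then feed them into the group-action Nordhaus-Gaddum inequality of Proposition~\ref{NG-new-r}.

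First I would observe that in $G[B\cup M]$ the set $B$ induces a clique, so every proper coloring assigns the $b$ vertices of $B$ exactly $b$ distinct colors. By Definition~\ref{xandy}, $x$ is precisely the minimum number of additional colors on $M$ needed to extend such a coloring to a proper $\Gamma$-distinguishing coloring of $G[B\cup M]$. Hence $\chi_D^{\Gamma}(G[B\cup M]) = b + x$. Dually, since $M$ induces a clique in $\gbar[B\cup M] = \overline{G[B\cup M]}$, the same reasoning with the roles of $B$ and $M$ interchanged gives $\chi_D^{\Gamma}(\gbar[B\cup M]) = m + y$.

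Next I would apply Proposition~\ref{NG-new-r} to the graph $G[B\cup M]$, which has $b+m$ vertices, together with the group $\Gamma$ (which restricts to a subgroup of $Aut(G[B\cup M])$ by Proposition~\ref{type1-gamma}). This yields $\chi_D^{\Gamma}(G[B\cup M]) + \chi_D^{\Gamma}(\gbar[B\cup M]) \le (b+m) + D^{\Gamma}(G[B\cup M])$. Substituting the two identities above gives $(b+x) + (m+y) \le (b+m) + D^{\Gamma}(G[B\cup M])$, and cancelling $b+m$ leaves $x+y \le D^{\Gamma}(G[B\cup M])$. Finally, Corollary~\ref{group} states $D(G) = \max\{a, \ell, D^{\Gamma}(G[B\cup M])\}$, so $D^{\Gamma}(G[B\cup M]) \le D(G)$ and therefore $x+y \le D(G)$.

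The step I expect to require the most care is the identification $\chi_D^{\Gamma}(G[B\cup M]) = b+x$ and its dual $\chi_D^{\Gamma}(\gbar[B\cup M]) = m+y$: one must argue that forcing $B$ to carry its forced $b$ clique-colors and then minimizing the extra colors on $M$ genuinely computes the $\Gamma$-distinguishing chromatic number, i.e.\ that no proper coloring can do better by coloring $B$ with a different set of colors. This is exactly the clique observation (a $b$-vertex clique uses exactly $b$ colors in any proper coloring, so the only freedom lies in the colors placed on $M$), but it should be spelled out explicitly. Everything after that is a direct substitution into Proposition~\ref{NG-new-r} followed by an appeal to Corollary~\ref{group}.
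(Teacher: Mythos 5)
Your proposal is correct and follows essentially the same route as the paper: identify $\chi_D^{\Gamma}(G[B\cup M])=b+x$ and $\chi_D^{\Gamma}(\gbar[B\cup M])=m+y$, apply Proposition~\ref{NG-new-r} to $G[B\cup M]$, and finish with $D^{\Gamma}(G[B\cup M])\leq D(G)$ from Corollary~\ref{group}. Your extra care in justifying the identification via the clique structure of $B$ (and of $M$ in the complement) is a sound elaboration of what the paper treats as immediate from Definition~\ref{xandy}.
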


\begin{proof} The first inequality follows immediately because $x, y$ are both nonnegative.  For the second inequality, let $b=|B|$ and $m=|M|$.  By the definition of $x$,  $\chi_D^{\Gamma}(G[B\cup M])=b+x$, and by the definition of $y$,  $\chi_D^{\Gamma}(\gbar[B\cup M])=m+y$.  Then, applying 
Proposition \ref{NG-new-r} to the graph $G[B\cup M]$, we get  \[(b+x)+(m+y)=\chi_D^{\Gamma}(G[B\cup M]) + \chi_D^{\Gamma}(\gbar[B\cup M]) \leq b+m + D^{\Gamma}(G[B\cup M]).\]
Furthermore, $D(G)\geq  D^{\Gamma}(G[B\cup M]) $ from Corollary \ref{group}.  Thus, $x+y\leq D(G)$ as desired.
\end{proof}

\begin{lemma} \label{x} Let $G$ be a Type 1 NG-graph and define $A, B, L, M$ as in Definition \ref{ABLM}, and $x$ as in Definition \ref{xandy}.  Then  $x< D(G)$.
\end{lemma}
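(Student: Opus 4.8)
The plan is to prove the strict inequality through the chain $x < D^{\Gamma}(G[B\cup M]) \le D(G)$, where the second inequality is immediate from Corollary~\ref{group}. First dispose of the trivial case $M=\emptyset$: then there is nothing to color in $M$, so $x=0$, and since $G$ is nonempty we have $D(G)\ge 1$ and are done. So assume $M\neq\emptyset$. Since every vertex of $M\subseteq C_G$ has all of its neighbors in $B$ (the set $C_G$ is independent and there are no $A$--$C$ edges in a Type~1 NG-graph), I partition $M$ into classes $M_1,M_2,\ldots$, where $M_i$ consists of the vertices of $M$ sharing a common neighborhood $N_i\subseteq B$, and I let $t$ denote the size of a largest class. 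The whole argument reduces to the two bounds $D^{\Gamma}(G[B\cup M])\geq t$ and $x\leq t-1$.

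For the lower bound, recall from the proof of Proposition~\ref{type1-gamma} that every automorphism preserves the degree classes $A,B,L,M$ setwise. The vertices of a class $M_i$ are pairwise nonadjacent (as $M$ is independent) and share the neighborhood $N_i$, so any permutation of $M_i$ that fixes all other vertices is an automorphism fixing $A$ and $L$; hence $\Gamma$ contains the full symmetric group acting on $M_i$. Breaking this symmetry requires the vertices of $M_i$ to receive distinct colors, so any coloring of $G[B\cup M]$ that is distinguishing under $\Gamma$ uses at least $|M_i|$ colors. Taking the largest class gives $D^{\Gamma}(G[B\cup M])\geq t$.

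For the upper bound I exhibit a proper coloring of $G[B\cup M]$, distinguishing under $\Gamma$, that uses the $b$ colors on $B$ together with at most $t-1$ additional colors. Color the clique $B$ with distinct colors $1,\ldots,b$. The crucial slack comes from properness: each $v\in M$ satisfies $deg_G(v)\le b-1$, so $|N_i|\le b-1$ and each class $M_i$ may legally reuse the colors of its $b-|N_i|\geq 1$ non-neighbors in $B$. Within each $M_i$ assign its vertices distinct colors, first using the colors of as many non-neighbors in $B$ as possible and then drawing from a fixed palette of $t-1$ new colors shared across all classes; since $|M_i|\le t$ and at least one $B$-color is always available, at most $t-1$ fresh colors are ever needed. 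This coloring is proper, and it is distinguishing under $\Gamma$: a color-preserving automorphism must fix $B$ pointwise (because $B$ is preserved setwise and colored with distinct colors), hence it preserves each neighborhood $N_i$ and therefore each class $M_i$, and being rainbow on each $M_i$ forces it to fix $M$ pointwise as well. Thus $x\le t-1$, and combining with the lower bound yields $x\le t-1 < t \le D^{\Gamma}(G[B\cup M])\le D(G)$.

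The main obstacle is the upper bound, and specifically extracting the one unit of slack that separates $x$ from $D^{\Gamma}(G[B\cup M])$: the distinguishing number alone only guarantees that each twin class can be colored with distinct colors, whereas properness guarantees that at least one $B$-color can be recycled inside every class, so one fewer fresh color than the distinguishing lower bound always suffices. I note that Lemma~\ref{xy} already gives $x\le x+y\le D(G)$, which settles every case with $y\ge 1$; the twin-class analysis above is exactly what rules out $x=D(G)$ in the remaining case $y=0$, and in fact it produces the strict bound uniformly.
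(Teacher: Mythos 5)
Your proof is correct and follows essentially the same route as the paper: color the clique $B$ rainbow, partition $M$ into twin classes by neighborhood in $B$, observe that the largest twin class bounds $D(G)$ from below, and use the fact that every vertex of $M$ misses at least one neighbor in $B$ to recycle a $B$-color in each class, saving exactly the one color needed for strictness. The only (harmless) differences are that you handle $M=\emptyset$ explicitly, route the lower bound through $D^{\Gamma}(G[B\cup M])$ via Corollary~\ref{group} rather than directly exhibiting the transposition as an automorphism of $G$, and state the bound as $x\le t-1$ rather than computing $x$ exactly.
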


\begin{proof} Let $b=|B|$.  
We define a distinguishing and proper coloring  of $G[B\cup M]$. First, color the vertices in $B$ with the $b$ colors $\{1,2,3,\ldots, b\}$.  We know $b$ colors are needed, since $G[B]$ is complete.  Each vertex of $B$ is then fixed by its unique label.  The vertices in $M$ form an independent set, so would need to be different colors only in order to be distinguished.  If $u,v\in M$ have different neighborhoods in $B$, then the colors of the vertices in those neighborhoods are different sets, so an automorphism taking $u$ to $v$ will not preserve colors.  

For each vertex $u\in M$, let $S_u\subseteq \{1,2,3,\ldots, b\}$ be the set of colors of the vertices in $N(u)$, and 
define $T_u=\{ v\in M: N(v)=N(u)\}$. Note that for any two vertices in $T_u$, there  is an automorphism of $G$ that interchanges them and fixes the rest of $G$.  Thus, $D(G)\geq |T_u|$ for every $u\in M$.  
In order to achieve a proper and distinguishing coloring, for each $u\in M$, each set $T_u$ must be colored distinctly and the colors used on the vertices in $T_u$ must be disjoint from $S_u$.  Conversely, if this is achieved, we have a proper and distinguishing coloring of $G[B\cup M]$.  Let $u_1, u_2, \ldots, u_k$ be chosen so that $T_{u_1}, T_{u_2}, \ldots , T_{u_k}$ is a partition of $M$.  

For $1\leq i\leq k$, we color the vertices in $T_{u_i}$ distinctly, using as many colors in $\{1,2,3,\ldots,b\} - S_u$ as possible.  The smallest number of colors, in addition to our original $b$ colors, that we need is $max_{1\leq i\leq k} \{|T_{u_i}| - (b-|S_u|)\}$.  Since $x$ 
is the minimum number of colors, above the $b$ colors used on the vertices in $B$, needed to color the vertices in $M$, to get a distinguishing and proper coloring of $G[B\cup M]$ under the action of $\Gamma$, $x=max_{1\leq i\leq k} \{|T_{u_i}| - (b-|S_u|)\}$. 
By the definition of $M$, each vertex in $M$ is missing at least one edge to $B$, so  for each $i$, we can use at least one color from $\{1,2,3\ldots,b\}$, and thus $x<max_{1\leq i\leq k} \{|T_i|\}\leq D(G)$.  

\end{proof}

\begin{lemma} \label{y} Let $G$ be a Type 1  NG-graph $G$, with an ABLM partition.   If $\ell=0$, then $y< D(G)$.
\end{lemma}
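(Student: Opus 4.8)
The plan is to mirror the proof of Lemma~\ref{x}, but applied to the complement $\gbar[B\cup M]$ rather than to $G[B\cup M]$, exploiting the fact that complementation interchanges the roles of the clique $B$ and the independent set $M$. By Definition~\ref{xandy}, $y$ counts the extra colors, beyond the $m$ colors forced on $M$, needed to color $B$ so that $\gbar[B\cup M]$ is properly and distinguishingly colored under $\Gamma$. In $\gbar[B\cup M]$ the set $M$ induces a clique (its $m$ vertices must receive distinct colors, fixing each of them) while $B$ induces an independent set; thus two vertices $w,w'\in B$ are interchanged by an element of $\Gamma$ exactly when $N_{\gbar}(w)\cap M = N_{\gbar}(w')\cap M$, equivalently when $N_G(w)\cap M = N_G(w')\cap M$. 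Here the hypothesis $\ell=0$ is convenient, since then $L=\emptyset$ and every vertex of $B$ automatically has the same relationship to $A\cup L$, so only the $M$-neighborhood matters. Writing $T_w = \{w'\in B : N_G(w')\cap M = N_G(w)\cap M\}$ and letting $S_w$ be the set of colors on $N_{\gbar}(w)\cap M$, the same bookkeeping as in Lemma~\ref{x} gives $y = \max_i\{|T_{w_i}| - (m - |S_{w_i}|)\}$ over a partition $T_{w_1},\dots,T_{w_k}$ of $B$, while $D(G)\ge |T_{w_i}|$ for each $i$ because the vertices of each $T_{w_i}$ are pairwise swappable by automorphisms of $G$.

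The crucial new ingredient, playing the role that ``each vertex of $M$ misses an edge to $B$'' played in Lemma~\ref{x}, is that when $\ell=0$ each $w\in B$ has at least one neighbor in $M$ inside $G$, so that $w$ can reuse at least one color from $M$ in $\gbar$. I would establish this by a short degree count. Since $G$ is a Type~1 NG-graph, $\chi(G)=a+b$ with $a=|A|$ and $b=|B|$, so every $w\in B$ satisfies $deg_G(w)\ge \chi(G)=a+b$ by the definition of $B$. On the other hand, by conditions (ii) and (iv) of Theorem~\ref{NG-charac} together with Definition~\ref{ABLM}, vertex $w$ is adjacent to all of $A$, to all of $B\setminus\{w\}$, to all of $L$, and to some subset of $M$, giving $deg_G(w) = a + (b-1) + \ell + |N_G(w)\cap M|$. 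Combining the two and setting $\ell=0$ yields $|N_G(w)\cap M|\ge 1$ for every $w\in B$.

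Translating back, $m - |S_w| = |N_G(w)\cap M|\ge 1$ for each $w\in B$, so each term in the maximum defining $y$ obeys $|T_{w_i}| - (m-|S_{w_i}|)\le |T_{w_i}| - 1$, whence $y \le \max_i |T_{w_i}| - 1 < \max_i |T_{w_i}| \le D(G)$, as required. I expect the only genuinely delicate point to be the correct transcription of the $\Gamma$-distinguishing setup to the complement, namely verifying that the interchangeable units of $B$ are precisely the equal-$M$-neighborhood classes $T_w$ and that $D(G)\ge |T_w|$, together with confirming that $\ell=0$ is exactly what forces an $M$-neighbor: for $\ell>0$ the same count gives only $|N_G(w)\cap M|\ge 1-\ell$, which imposes no constraint, and this is precisely why the lemma is stated under the assumption $\ell=0$.
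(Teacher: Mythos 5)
Your proposal is correct and follows essentially the same route as the paper's proof: a degree count showing that when $\ell=0$ every vertex of $B$ has a neighbor in $M$ (hence a missing edge to $M$ in $\gbar$), followed by transcribing the argument of Lemma~\ref{x} to $\gbar[B\cup M]$ with the equal-neighborhood classes in $B$ playing the role of the classes $T_u$. Your write-up simply makes explicit the bookkeeping that the paper compresses into ``following the argument of Lemma~\ref{x}.''
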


\begin{proof} Given $\ell=0$, we know $C=M$.  
Since each vertex in $B$ has degree greater than each vertex in $A$ in $G$, each vertex in $B$ has an edge to some vertex in $M$ in $G$.  That means that in $\gbar$, every vertex in $B$ is missing an edge to some vertex in $M$.  For each $v\in B$, let $W_v=\{w\in B:N_{\gbar}(w)=N_{\gbar}(v) \}$.  
Following the argument of Lemma \ref{x}, $y< max_{v\in B}\{|W_v|\}\leq  D(\gbar)=D(G)$.
\end{proof}

We are now ready to characterize those Type 1 NG-graphs that are also NGD-graphs.  
Note that the vertices in $A_G$ in a Type 1 NG-graph $G$ form a complete subgraph, and all have the same neighborhood in the rest of the graph.  So $D(G)\geq |A_G|$.  Similarly, an independent set $L$ of vertices in $C_G$ each of which is adjacent to every vertex in $B_G$ would all need to be distinctly colored in any distinguishing coloring of $G$, so $D(G)\geq |L|$.  In the theorem below, we show that  for any Type 1 NG-graph $G$, $D(G)$ is the maximum of these quantities if and only if $G$ is NGD-graph.

 \begin{Thm} \label{type1}  Let $G$ be a Type 1  NG-graph $G$ and $a=|A_G|$ and $\ell$ is the number of vertices in $C_G$ that are adjacent to every vertex in $B_G$.
Then $G$ is an NGD-graph iff $D(G)=max\{a,\ell\}$.   \end{Thm}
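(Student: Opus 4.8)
The plan is to compute both $\chi_D(G)$ and $\chi_D(\gbar)$ explicitly in terms of the parameters $a=|A|$, $b=|B|$, $\ell=|L|$, $m=|M|$, $x=x_G$, $y=y_G$, and then reduce the NGD condition to a clean numerical identity. Throughout I write $D=D(G)$ and use $n=a+b+\ell+m$. The key structural input is Proposition \ref{type1-gamma}, which gives $Aut(G)\cong S_{|A|}\oplus S_{|L|}\oplus\Gamma$; this says that a proper coloring of $G$ is distinguishing exactly when its restriction distinguishes $A$, distinguishes $L$, and distinguishes $G[B\cup M]$ under $\Gamma$, with these three requirements completely independent. Consequently the three palettes may be freely shared across the parts, and this is what makes the counting possible.

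First I would show $\chi_D(G)=b+\max\{a,\ell,x\}$. Since $A\cup B$ is a clique it needs $a+b$ colors and the clique $A$ is automatically distinguished; the vertices of $L$ share a common neighborhood and form an independent set, so they need $\ell$ distinct colors avoiding $B$'s palette, which may reuse $A$'s $a$ colors; and by Definition \ref{xandy} the set $M$ needs $x$ colors beyond $B$'s palette to distinguish $G[B\cup M]$ under $\Gamma$, again reusable from $A$'s colors since no vertex of $M$ meets $A$. Counting the colors used outside $B$'s palette, the independence of the three constraints yields both the lower bound $b+\max\{a,\ell,x\}$ and a matching construction.

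Next I would show $\chi_D(\gbar)=m+\max\{a+\ell,y\}$ by the symmetric analysis in $\gbar$, where $C=L\cup M$ is a clique while $A$ and $B$ are independent and $Aut(\gbar)=Aut(G)$. The clique $C$ needs $\ell+m$ colors and distinguishes $L$ automatically; the vertices of $A$ form an independent set with common neighborhood $C$, so they need $a$ distinct colors outside $C$'s palette; and $B$ needs $y$ colors beyond $M$'s palette (Definition \ref{xandy}). The one point requiring care is that in $\gbar$ the set $B$ is adjacent to no vertex of $L$, so these $y$ extra colors may reuse $L$'s $\ell$ colors (and $A$'s colors) before any genuinely new color is spent; tracking the colors outside $C$'s palette then gives the stated formula. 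These two formulas are the heart of the argument and the main obstacle, since they require careful bookkeeping of which palette each part may reuse, all justified by the direct-product decomposition of $Aut(G)$.

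Finally I would substitute into the NGD equation $\chi_D(G)+\chi_D(\gbar)=n+D$; after cancelling $b+m$ this becomes $\max\{a,\ell,x\}+\max\{a+\ell,y\}=a+\ell+D$. For the backward direction, if $D=\max\{a,\ell\}$ then $\max\{a,\ell,x\}=D$ because $x<D$ (Lemma \ref{x}), while $\max\{a+\ell,y\}=a+\ell$ because $y\le D\le a+\ell$ (Lemma \ref{xy}), so the identity holds and $G$ is an NGD-graph. For the forward direction I would assume $G$ is NGD and, for contradiction, that $D>\max\{a,\ell\}$; then $\max\{a,\ell,x\}\le D-1$ forces $\max\{a+\ell,y\}\ge a+\ell+1$, hence this maximum equals $y$, and the identity gives $\max\{a,\ell,x\}=a+\ell+D-y$. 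Combining $x\le D-y$ (Lemma \ref{xy}) with $y\le D$, a short comparison of $a+\ell+D-y$ against each of $a$, $\ell$, $D-y$ shows the only possibility is $\ell=0$ and $y=D$, which contradicts Lemma \ref{y}. Since $D\ge\max\{a,\ell\}$ always holds by Corollary \ref{group}, this yields $D=\max\{a,\ell\}$ and completes the proof.
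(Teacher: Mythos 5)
Your proposal is correct and takes essentially the same route as the paper: both arguments rest on the formulas $\chi_D(G)=b+\max\{a,\ell,x\}$ and $\chi_D(\gbar)=m+\max\{a+\ell,y\}$ combined with Lemma~\ref{xy}, Lemma~\ref{x}, Lemma~\ref{y}, and Corollary~\ref{group}. The only difference is organizational --- the paper splits into five cases according to which term attains each maximum, whereas you verify the resulting identity directly in one direction and run a contradiction argument (using $a\geq 1$) in the other.
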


\begin{proof}
In any distinguishing and proper coloring  of $G$, all vertices in $B$ are colored distinctly, since $B$ is complete, and there are no edges between the set $A$ and the set $L\cup M$, 
so colors used on vertices in $A$ can be reused in $L\cup M$.  There are  no edges between $L$ and $M$, so colors used on $L$ can be reused on $M$.  Thus, $\chi_D(G)=b+max\{a, \ell, x\}$.  In a distinguishing and 
proper coloring  of $\gbar$, every vertex in $A\cup L\cup M$ must be colored 
distinctly since $L\cup M$ is a complete graph, and all vertices in $A$ are adjacent to 
all vertices in $L\cup M$, and all vertices in $A$ must be colored distinctly to eliminate 
the symmetries.  Colors used on $A$ and $L$ can be re-used on $B$, since there are no 
edges in $\gbar$ between $B$ and $A\cup L$.  Thus, $\chi_D(\gbar) = m+max\{a+\ell, y\}$. 
Hence 

\begin{equation}\label{eqn4} \chi_D(G)+\chi_D(\gbar)=b+\mbox{max}\{a,\ell,x\}+m+\mbox{max}\{a+\ell, y\}.\end{equation}

\medskip

We analyze the cases, depending on $max\{a,\ell,x\}$ and $max\{a+\ell, y\}$. Recall from Definition \ref{NG-def} that $G$ is an NGD-graph iff $\chi_D(G)+\chi_D(\gbar)=a+b+\ell+m+D(G).$  Our proof will also show that the graphs in Cases (2) - (5) are not NGD-graphs.

\medskip 

\noindent {\bf Case (1)} $max\{a,\ell,x\}=max\{a,\ell\}$ and $max\{a+\ell,y\}=a+\ell$. 

Using Equation~(\ref{eqn4}), we have 
\[\chi_D(G)+\chi_D(\gbar)=b+max\{a,\ell\}+m+a+\ell=(a+b+\ell+m)+max\{a,\ell\}.\]    
In this case $G$ is an NGD-graph if and only if $D(G)=max\{a,\ell\}$.  By Corollary~\ref{group}, $D(G)\geq max\{a,\ell\}$, so if $D(G)=max\{a,\ell\}$,  then $G$ is an NGD-graph, and if $D(G)>max\{a,\ell\}$, then $G$ is not an NGD-graph.  

\medskip
In Cases (2) - (5), we show that $G$ is not an NGD-graph and that $D(G)>max\{a,\ell\}$. 
\medskip

\noindent {\bf Case (2)} $max\{a,\ell,x\}=x$ and $max\{a+\ell,y\}=a+\ell$. 

Using Equation~(\ref{eqn4}), we have 
\[\chi_D(G)+\chi_D(\gbar)=b+x+m+a+\ell=(a+b+\ell+m)+x.\] 
Then $G$ is an NGD-graph iff $D(G)=x$.  By Lemma \ref{x}, $x<D(G)$, so $G$ is not  an NGD-graph, and indeed $D(G)>x\geq max\{a, \ell\}$ as desired.

\medskip

\noindent {\bf Case (3)} $max\{a,\ell,x\}=a$ and $max\{a+\ell,y\}=y$.  

Using Equation~\ref{eqn4}, we have
\[\chi_D(G)+\chi_D(\gbar)=b+a+m+y=(a+b+m)+y.\]  
  Suppose that $G$ were an NGD-graph.  Then $y=\ell+D(G)$, by Lemma \ref{xy}, $\ell + D(G)=y\leq x+y\leq D(G).$ So $\ell=0$ and $y=D(G)$.  By Lemma \ref{y}, when $\ell=0$, we have $y<D(G)$, a contradiction, so $G$ is not an NGD-graph.  

Using Lemma \ref{xy} and the assumptions of this case, we have $D(G)\geq  x+y\geq y \geq a+\ell$. We know $a>0$ by Theorem \ref{NG-charac}. If $\ell>0$, then $D(G)\geq a+\ell>max\{a,\ell\}$.  If $\ell=0$, then by Lemma \ref{y}, $D(G)>y\geq a+\ell=a\geq max\{a,\ell\}$.  So we have shown $D(G)>max\{a,\ell\}$ as desired.  

 \medskip

\noindent {\bf Case (4)} $max\{a,\ell,x\}=\ell$ and $max\{a+\ell,y\}=y$. 

Using Equation~\ref{eqn4}, we have
\[\chi_D(G)+\chi_D(\gbar)=b+\ell+m+y.\]  
Using $a>0$ from Theorem \ref{NG-charac} and $y\leq D(G)$ from Lemma \ref{xy}, we have
$\chi_D(G)+\chi_D(\gbar)<(a+b+\ell+m)+y\leq (a+b+\ell+m)+D(G)$.  Thus, $G$ is not an NGD-graph.  

Using the assumptions of this case, and Theorem \ref{NG-charac}, we have $\ell\geq a>0$, thus $a+\ell>max\{a,\ell\}$.  Now using Lemma \ref{xy}, and the assumptions of this case, we have $D(G)\geq  y\geq a+\ell >max\{a,\ell\}$ as desired.   
  \medskip

\noindent {\bf Case (5)} $max\{a,\ell,x\}=x$ and $max\{a+\ell,y\}=y$.  

Using Equation~\ref{eqn4}, we have 
\[\chi_D(G)+\chi_D(\gbar)=b+x+m+y.\]
 Using $a>0$ from  Theorem \ref{NG-charac}  and $x+y\leq D(G)$ from  Lemma \ref{xy}, 
we have $\chi_D(G)+\chi_D(\gbar)< (a+b+\ell+m)+D(G).$  Hence $G$ is not an NGD-graph.  

Using $a>0$ from Theorem \ref{NG-charac} and the assumptions of this case, we have 
$x+y \geq a+a+\ell >max\{a,\ell\}$.  However, $D(G)\geq x+y$ by Lemma \ref{xy}, so we get $D(G)>max\{a,\ell\}$ as desired.  
\end{proof}

If $G$ is a Type 2 NG-graph, then its complement, $\gbar$, is a Type 1 NG-graph by Proposition \ref{comps}, and applying Theorem \ref{type1} to $\gbar$ yields the following. 

\begin{Cor} \label{type2}
A Type 2 NG-graph $G$ is an NGD-graph iff   $D(G) =|A_G|$ or $D(G)$ equals the number of vertices in $B_G$ that have no adjacencies to vertices in $C_G$.
\end{Cor}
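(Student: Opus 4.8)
The plan is to reduce the statement to Theorem~\ref{type1} applied to the complement. Set $H=\gbar$. Since $G$ is a Type~2 NG-graph, Proposition~\ref{comps} tells us $H$ is a Type~1 NG-graph, so Theorem~\ref{type1} applies to $H$. Moreover, the defining NGD condition $\chi_D(G)+\chi_D(\gbar)=n+D(G)$ is symmetric under interchanging $G$ and $\gbar$: the left-hand side is unchanged and $D(G)=D(\gbar)$ by Remark~\ref{dgbar}. Hence $G$ is an NGD-graph if and only if $H$ is an NGD-graph, and $D(G)=D(H)$. So it suffices to restate the criterion of Theorem~\ref{type1} for $H$ in terms of the data of $G$.

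The heart of the argument is to translate the degree-based partition of $H$ back into that of $G$. From the computation in the proof of Lemma~\ref{C-lem}, complementation interchanges the roles of these classes, giving $A_H=A_G$, $B_H=C_G$, and $C_H=B_G$. Applying Theorem~\ref{type1} to $H$, we obtain that $H$ is an NGD-graph if and only if $D(H)=\max\{|A_H|,\ell_H\}$, where $\ell_H$ is the number of vertices of $C_H$ adjacent in $H$ to every vertex of $B_H$. Substituting the correspondences, $|A_H|=|A_G|$, and $\ell_H$ counts the vertices $v\in B_G$ that are adjacent in $\gbar$ to every vertex of $C_G$; since adjacency in $\gbar$ is exactly non-adjacency in $G$, this is precisely the number of vertices of $B_G$ having no neighbor in $C_G$.

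Finally, I would rewrite the equality-with-maximum as the desired disjunction. By the remarks preceding Theorem~\ref{type1}, or equivalently by Corollary~\ref{group} applied to $H$, we have $D(H)\geq |A_H|$ and $D(H)\geq \ell_H$, hence $D(H)\geq\max\{|A_H|,\ell_H\}$. Therefore $D(G)=D(H)=\max\{|A_G|,\ell_H\}$ holds if and only if $D(G)$ equals one of $|A_G|$ or $\ell_H$, because the smaller of the two can never exceed $D(G)$. Combining this with the translation above yields the stated characterization: $G$ is an NGD-graph iff $D(G)=|A_G|$ or $D(G)$ equals the number of vertices of $B_G$ with no adjacency to $C_G$.

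The only delicate points are the bookkeeping for the set correspondence $A_H=A_G,\ B_H=C_G,\ C_H=B_G$ under complementation, and the conversion of the $\max$-equality into the ``or''-form; both become routine once the lower bounds $D(G)\geq|A_G|$ and $D(G)\geq \ell_H$ are invoked. I expect the verification that ``adjacent to all of $C_G$ in $\gbar$'' correctly becomes ``no adjacency to $C_G$ in $G$'' to be the single step most prone to a sign/complement slip, so I would state that translation explicitly.
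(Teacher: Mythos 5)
Your proposal is correct and follows the same route as the paper: the paper's proof of Corollary~\ref{type2} is exactly the one-line observation that $\gbar$ is a Type 1 NG-graph by Proposition~\ref{comps} and that Theorem~\ref{type1} applied to $\gbar$ gives the result. You have simply made explicit the bookkeeping the paper leaves implicit (the correspondence $A_{\gbar}=A_G$, $B_{\gbar}=C_G$, $C_{\gbar}=B_G$, the symmetry of the NGD condition via $D(G)=D(\gbar)$, and the conversion of the $\max$-equality to a disjunction using $D(G)\geq|A_G|$ and $D(G)\geq\ell$), and all of these steps check out.
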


In our next example, we describe a Type 1 NG-graph which falls into Case 1 of the proof of Theorem \ref{type1}, but is not an NGD-graph. 
\begin{example} \rm 
Let $G$ be a Type 1 NG-graph with $a=1$, $b=5$, $\ell=0$, $m=5$, and then define the edges between $B_G$ and $M_G$ so that each vertex in $M_G$ has degree 1 and  is adjacent to a different vertex in $B_G$.  Then $D(G)=3$, $x=0$, $y=0$, which fits in Case 1, except that $G$ is not an NGD-graph, because $D(G)>max\{a,\ell\}$.  
\end{example}

We conclude with two questions and an acknowledgement.  
\begin{Ques} \rm Theorem \ref{type1} characterizes Type 1 NG-graphs which are NGD-graphs by their distinguishing number.  Can the distinguishing number of the class of Type 1 NG-graphs be determined in polynomial time?
\end{Ques}
\begin{Ques} \rm 
In Theorem \ref{type1} and Corollary \ref{type2}, we have characterized those NG-graphs which are NGD-graphs.  Can this be extended to a characterization of the class of NGD-graphs? 
\end{Ques}

\subsection*{Acknowledgements} The authors would like to thank Galen Turner, who suggested we consider distinguishing chromatic number analogues of the Nordhaus and Gaddum inequalities.


\begin{thebibliography}{99}

\bibitem{AlBo06} M. O. Albertson and D. Boutin, {\it Distinguishing geometric graphs, } J. Graph Theory 53 (2006) no.2, 135--150.  

\bibitem{AlCo96} M. O. Albertson and K.L. Collins,
{\it Symmetry Breaking in Graphs}, Electron. J. of Combin., {\bf 3}
(1996) \#R18.

\bibitem{ArChDe08} V. Arvind, C. T. Cheng, and N. R.  Devanur, {\it On computing the distinguishing numbers of planar graphs and beyond: a counting approach,} SIAM J. Discrete Math. 22 (2008), no. 4, 1297--1324.

\bibitem{BaCa11} R. R. Bailey and P. J. Cameron, {\it Base size, metric dimension and other invariants of groups and graphs,} Bull. London Math. Soc. 43 (2011) no. 2, 209--242.
 
\bibitem{CaSe} M. Cavers and K. Seyffarth, {\it On the distinguishing chromatic number of a graph}, preprint. 

\bibitem{Ch09} C. T. Cheng, {\it On computing the distinguishing and distinguishing chromatic numbers of interval graphs and other results,} Discrete Math. 309 (2009), no. 16, 5169--5182. 

\bibitem{ChHaKa10} J. O. Choi, S. G. Hartke and H. Kaul, {\it Distinguishing chromatic number of Cartesian products of graphs, } SIAM J. Discrete Math. 24 (2010) no. 1, 82--100.

\bibitem{CoHoTr09} K. L. Collins, M. Hovey and A. N. Trenk, {\it Bounds on the distinguishing chromatic number,}  Electron. J. Combin. 16 (2009), no. 1, Research Paper 88, 14 pp.

\bibitem{CoTr06}   K. L. Collins and A. N. Trenk,
{\it The Distinguishing Chromatic Number}, Electron. J. of Combin., {\bf
13} (2006) \#R16.

\bibitem{ElSc11} M. N. Ellingham and  J. Z. Schroeder, {\it Distinguishing partitions and asymmetric uniform hypergraphs.,} Ars Math. Contemp. 4 (2011), no. 1, 111--123.
 
\bibitem{Fi66}   H.J. Finck,
{\it On the Chromatic Number of a Graph and its Complements,} Theory of Graphs, Proceedings of the Colloquium, Tihany, Hungary,   (1966), 99--113.

\bibitem{ImKlTr07} W. Imrich, J. Jerebic, S. Klav\v zar and V. Trofimov, {\it Distinguishing infinite graphs,} European J. Combin. 29 (2008), no. 4, 922--929.

\bibitem{Ko08} A. V. Konygin, {\it On primitive permutation groups with nontrivial global stabilizers,} Proc. Steklov Inst. Math. 261 (2008) suppl. 1, S113-S116.  

\bibitem{LaNgSa10} C. Laflamme, V. T. L. Nguyen, N. Sauer, {\it Distinguishing number of countable homogeneous relational structures,}  Electron. J. Combin. 17 (2010), no. 1, Research Paper 20, 17 pp.

\bibitem{LaBh09} A. K. Lal and B. Bhattacharjya, {\it Breaking the symmetries of the book graph and the generalized Petersen graph,} SIAM J. Discrete Math. 23 (2009), no. 3, 1200--1216.
 
\bibitem{NoGa56}   E. A. Nordhaus and J. W. Gaddum,
{\it On Complementary Graphs},  Amer. Math. Monthly, {\bf 63}  (1956), 175--177.
 
\bibitem{StTu08}   C.L. Starr and G.E. Turner III,
{\it Complementary Graphs and the Chromatic Number}, Missouri J. of Mathematical Sciences, {\bf 20}(1): (2008), 19--26.

\bibitem{Tu11} Tucker, Thomas W. Distinguishing maps. Electron. J. Combin. 18 (2011), no. 1, Paper 50, 21 pp.

\bibitem{Ty04} J. Tymoczko, {\it Distinguishing numbers for graphs and groups,} Electron. J. Combin. 11 (2004), no. 1, Research Paper 63, 13 pp. 

\bibitem{WaZh07} M. E. Watkins and X. Zhou, {\it Distinguishability of locally finite trees,} Electron. J. Combin. 14 (2007) no. 1, Research Paper 29, 10pp. 

\bibitem{WoZh09} T. Wong and X. Zhu, {\it Distinguishing labeling of group actions,} Discrete Math. 309 (2009), no. 6, 1760--1765, 

\bibitem{Zy49}   A.A. Zykov,
{\it On Some Properties of Linear Complexes}, Math. Sbornik. NS, {\bf 24}: (1949), 163--188, [AMS Translation No. 79].

     
\end{thebibliography}
\end{document}